\newtheorem{theorem}{Theorem}[section]
\newtheorem{lemma}[theorem]{Lemma}
\newtheorem{corollary}[theorem]{Corollary}
\theoremstyle{definition}
\theoremstyle{remark}
\newtheorem{remark}[theorem]{Remark}
\numberwithin{equation}{section}
\begin{document}

\title{Generalized Verma Modules and Character Formulae for $\mathfrak{osp}(3|2m)$}

\author{Bintao Cao}
\address{Academy of Mathematics and Systems Science, Chinese Academy of Sciences, Beijing, 100190, China}
\email{caobintao@amss.ac.cn}

\author{Li Luo}
\address{Department of Mathematics, East China Normal University, Shanghai, 200241, China}
\email{lluo@math.ecnu.edu.cn}

\subjclass[2000]{Primary 17B10, 17B37.}



\keywords{ortho-symplectic superalgebra, generalized Verma module,
character formula, tensor module.}

\begin{abstract}

The character formula of any finite dimensional irreducible module
for Lie superalgebra $\mathfrak{osp}(3|2m)$ is obtained in terms of characters of generalized Verma modules.

\end{abstract}

\maketitle



\section{Introduction}

\subsection{}
It was already clear from the foundation papers
\cite{K1,K2,K3} by Kac that finding the character formulae of the
so-called atypical irreducible modules is one of the most
challenging problems in the theory of Lie superalgebras.

The finite-dimensional irreducible modules of the Lie superalgebras
$D(2,1;\alpha)$ and $C(n)$ were understood by van der Jeugt in
\cite{V1,V2} long ago. In the case of $\mathfrak{gl}(m|n)$, the
problem was first solved by Serganova in \cite{Se1}, where a
Kazhdan-Lusztig theory was developed. Later on, Brundan \cite{B}
discovered a remarkable connection between the general linear
superalgebra and the quantum group of $\mathfrak{gl}_\infty$, by
which the Kazhdan-Lusztig polynomials were computed quite directly.
This approach was further developed in \cite{SZ1, CWZ}.

Very recently, Gruson and Serganova \cite{GS}, and, Cheng, Lam and
Wang \cite{CLW} provided two independent and different approaches to
the finite-dimensional irreducible $\mathfrak{osp}$-characters.

\subsection{}
At the same time, the study of generalized Verma modules for
ortho-symplectic Lie superalgebra $\mathfrak{osp}(n|2)$ has recently
enjoyed considerable attention in \cite{L} by Luo and in \cite{SZ2}
by Su and Zhang. This effort is not only possibly useful to describe
the finite-dimensional irreducible $\mathfrak{osp}$-characters by
another approach completely different from \cite{GS, CLW} but also a
first step toward studying category $\mathcal {O}$.

Choose the distinguished Borel subalgebra for a basic classical Lie
superalgebra and consider the maximal parabolic subalgebra obtained
by removing the unique odd simple root. The generalized Verma
modules are those induced from finite dimensional irreducible Verma
modules over the parabolic subalgebra. In the case of types $A$ and
$C$, these modules coincide the Kac modules, which can be
interpreted as cohomology groups of bundles over the flag
supermanifold. However, in the case of types $B$ and $D$, the Kac
modules are not the generalized Verma modules but their maximal
finite-dimensional quotients. This present paper, as well as the
papers \cite{L, SZ2}, implies that the generalized Verma modules
also play important roles in the representation theory for Lie
superalgebras.

\subsection{}
Influenced by the work \cite{L}, here we shall also study any
finite-dimensional irreducible module $L_\lambda$ from $L_\mu\otimes
\mathbb{C}^{3|2m}$ where $\mathbb{C}^{3|2m}$ is the natural
representation and $L_\mu$ is another irreducible module known by
induction. We point out that the module $L_\mu\otimes
\mathbb{C}^{3|2m}$ may not be completely reducible. Thus we should
give a more detailed argument for the blocks of modules appearing in
$L_\mu\otimes \mathbb{C}^{3|2m}$.

Another key point of this work is how to express the trivial
character $\mbox{ch } L_0=1$ in terms of characters of generalized
Verma modules, for which a nontrivial combinatorial identity is
proved. This combinatorial identity is formulated by a certain
subset of Weyl group.

\subsection{}
The purpose of the present paper is to express the character
formulae for all finite dimensional irreducible modules of
$\mathfrak{osp}(3|2m)$ in terms of characters of generalized Verma
modules, which are known clearly. Our main results are Theorem 3.1
and Theorem 5.2.

The paper is organized as follows. In Section 2, we present some
background material on $\mathfrak{osp}(3|2m)$. In Section 3, the
character formulae for tail atypical weights are obtained. Section 4
is devoted to study the information of tensor modules. The result
for non-tail atypical weight is given in Section 5.

\section{Preliminaries}

We shall simplify $\mathfrak{osp}(3|2m)$ to $\mathfrak{g}$ and work
over the field $\mathbb{C}$ of complex numbers throughout the paper.
\subsection{Distinguished simple roots system} Choose the
distinguished Borel subalgebra \cite{K1} for $\mathfrak{g}$. Then
the set of simple roots is
\begin{equation}
\Pi=\{\delta_1-\delta_2,\delta_2-\delta_3,\ldots,\delta_{m-1}-\delta_m,\delta_m-\epsilon,\epsilon\}.
\end{equation}

The set of positive even roots and odd roots are
\begin{equation}
\Delta_0^+=\{\epsilon, \delta_i\pm\delta_j,2\delta_k\mid 1\leq
i<j\leq m,1\leq k\leq m\}
\end{equation}
and
\begin{equation}
\Delta_1^+=\{\delta_i,\delta_i\pm\epsilon\mid 1\leq i\leq m\},
\end{equation}
respectively.

The distinguished Dynkin diagram is given as follows:

\vspace{0.5cm} \setlength{\unitlength}{3pt}
\begin{picture}(108,6)\put(0,-1){$B(1|m)$:}
\put(22,0){\circle{2}}\put(17,-5){$\delta_1-\delta_2$}\put(23,0){\line(1,0){12}}
\put(36,0){\circle{2}}\put(32,-5){$\delta_2-\delta_3$}
\put(42,-0.5){$\cdots$}\put(52,0){\circle{2}}\put(53,0){\line(1,0){12}}
\put(58,-5){$\delta_m-\epsilon$} \put(44,-12){{\tt(Figure 1)}}
\put(65,-1){$\otimes$}\put(67,-0.25){\line(1,0){12}}\put(67,0.25){\line(1,0){12}}\put(80,0){\circle{2}}
\put(73,1.75){\line(3,-1){5.5}}\put(73,-1.75){\line(3,1){5.5}}\put(80,-5){$\epsilon$}
\end{picture}
\vspace{1.5cm}

The bilinear form $(\cdot,\cdot)$ on $\mathfrak{h}^*$ is defined by
\begin{equation}
(\delta_i,\delta_j)=\left\{\begin{array}{cc}-1,& (i=j)\\ 0, &(i\neq
j)\end{array}\right., \quad (\delta_i,\epsilon)=0,\quad
(\epsilon,\epsilon)=1 \quad \mbox{for}\quad 1\leq i,j\leq m.
\end{equation}

\subsection{$\mathbb{Z}$-grading and parabolic subalgebras}

The Lie superalgebra $\mathfrak{g}$ admits a
$\mathbb{Z}_2$-consistent $\mathbb{Z}$-grading
\begin{equation}
\mathfrak{g}=\mathfrak{g}_{-2}\oplus\mathfrak{g}_{-1}\oplus\mathfrak{g}_0\oplus\mathfrak{g}_{1}\oplus\mathfrak{g}_{2},
\end{equation} where
$\mathfrak{g}_0\cong\mathfrak{gl}(m)\oplus\mathfrak{sl}(2)$ is
spanned by $\mathfrak{h}$, $\mathfrak{g}_{\pm\epsilon}$ and
$\mathfrak{g}_{\pm(\delta_i-\delta_j)}$ $(1\leq i<j \leq m)$;
$\mathfrak{g}_{\pm1}$ is spanned by $\mathfrak{g}_{\pm\alpha}
(\alpha\in\Delta_1^+)$; and $\mathfrak{g}_{\pm2}$ is spanned by
$\mathfrak{g}_{\pm(\delta_i+\delta_j)}$ $(1\leq i\leq j \leq m)$.
There are two parabolic subalgebras
\begin{equation}
\mathfrak{u}=\mathfrak{g}_{1}\oplus\mathfrak{g}_{2}\quad
\mbox{with}\quad \mathfrak{u}_{\bar{0}}=\mathfrak{g}_{2} \mbox{ and
} \mathfrak{u}_{\bar{1}}=\mathfrak{g}_{1}
\end{equation} and
\begin{equation}
\mathfrak{u}^-=\mathfrak{g}_{-1}\oplus\mathfrak{g}_{-2}\quad
\mbox{with}\quad \mathfrak{u}^-_{\bar{0}}=\mathfrak{g}_{-} \mbox{
and } \mathfrak{u}^-_{\bar{1}}=\mathfrak{g}_{-1}.
\end{equation}

\subsection{Dominant integral weights} An element in $\mathfrak{h}^*$ is called a {\em weight}.
A weight $\lambda\in\mathfrak{h}^*$ will be written in terms of the
$\delta\epsilon$-basis as
\begin{equation}
\lambda=(\lambda_1,\lambda_2,\ldots,\lambda_m;\lambda_0)=\sum_{i=1}^m
\lambda_i\delta_i+\lambda_0\epsilon.
\end{equation}
Define
\begin{equation}
\mbox{ht }\lambda=\sum_{i=0}^m |\lambda_i|,
\end{equation}
and call it the height of $\lambda$.

A weight $\lambda$ is called {\em integral} if
$\lambda_0,\lambda_1,\ldots,\lambda_m\in\mathbb{Z}$.

For any weight $\lambda\in\mathfrak{h}^*$, there exists an
irreducible module $L_\lambda$ with highest weight $\lambda$.

\begin{theorem}{\bf(Kac \cite{K1})} For any integral weight $\lambda$, the irreducible
module $L_{\lambda}$ is finite dimensional if and only if
\begin{equation}
\left\{\begin{array}{l}  \lambda_0, \lambda_1,\ldots,\lambda_{m}\in
\mathbb{Z}_{\geq0},\\
\lambda_1\geq \lambda_2\geq\cdots\geq \lambda_{m-1}\geq\lambda_m,\\
\lambda_0=0\quad \mbox{if}\quad \lambda_m=0.
\end{array}\right.
\end{equation}
\end{theorem}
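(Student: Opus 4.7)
The plan is to handle the two implications separately, leveraging the $\mathbb{Z}$-grading of $\mathfrak{g}$ and the classical representation theory of the reductive even part $\mathfrak{g}_{\bar 0} \cong \mathfrak{sp}(2m) \oplus \mathfrak{so}(3)$, supplemented by a superalgebra-specific argument for the atypicality clause that $\lambda_0 = 0$ when $\lambda_m = 0$.

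For necessity, suppose $L_\lambda$ is finite dimensional, with highest-weight vector $v_\lambda$. Then $U(\mathfrak{g}_{\bar 0}) v_\lambda$ is a finite-dimensional highest-weight module over $\mathfrak{g}_{\bar 0}$, so classical dominance with respect to the simple roots $\delta_1 - \delta_2, \ldots, \delta_{m-1} - \delta_m, 2\delta_m$ of $\mathfrak{sp}(2m)$ together with $\epsilon$ of $\mathfrak{so}(3)$ immediately forces conditions (1) and (2). For the remaining clause, I would restrict further to the $\mathfrak{osp}(3|2) = B(1|1)$ subsuperalgebra generated by the root vectors for $\pm\delta_m, \pm\epsilon, \pm(\delta_m \pm \epsilon), \pm 2\delta_m$. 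The $B(1|1)$-submodule generated by $v_\lambda$ has highest weight $(\lambda_m;\lambda_0)$ in its own $(\delta_m,\epsilon)$ coordinates; when $\lambda_m = 0$, a direct Verma-module analysis inside $B(1|1)$ shows that finite-dimensionality forces $\lambda_0 = 0$.

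For sufficiency, I would argue by induction on $\mathrm{ht}\,\lambda$. The base case $\lambda = 0$ yields the trivial module $L_0 = \mathbb{C}$. For the inductive step, the weights of the natural module $\mathbb{C}^{3|2m}$ are $\pm\delta_i$, $\pm\epsilon$, and $0$; given dominant $\lambda$ satisfying (1)--(3), one chooses $\mu$ also satisfying (1)--(3) of strictly smaller height such that $\lambda - \mu$ is a weight of $\mathbb{C}^{3|2m}$. The boundary case $\lambda_m = 0$ (hence $\lambda_0 = 0$) requires a short case analysis to keep $\mu$ admissible, but such a $\mu$ always exists. By the inductive hypothesis, $L_\mu$ is finite dimensional, so $L_\mu \otimes \mathbb{C}^{3|2m}$ is finite dimensional, and $L_\lambda$---appearing as a composition factor---is finite dimensional as well.

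The main obstacle is the $B(1|1)$ base case underlying the necessity of clause (3), since dominance alone does not yield it; one must explicitly produce an infinite chain of linearly independent weight vectors in the $B(1|1)$-Verma module of highest weight $\lambda_0 \epsilon$ when $\lambda_0 > 0$, via PBW monomials in the three odd negative root vectors. A secondary issue in sufficiency is guaranteeing that the inductive choice of $\mu$ never violates clause (3), which is handled by the case analysis at the boundary.
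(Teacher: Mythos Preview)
The paper does not prove this theorem; it is quoted from Kac \cite{K1} without argument, so there is no ``paper's proof'' to compare against. I can only assess your outline on its own merits.

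Your necessity argument is essentially sound. The reduction of clause~(3) to the rank-one case $\mathfrak{osp}(3|2)$ works, and the $B(1|1)$ computation you flag as the main obstacle is in fact short: if $L_{(0;a)}$ were finite dimensional with $a>0$, then $f_{\delta_1-\epsilon}\,v_\lambda$ would be an $\mathfrak{sp}(2)$-highest weight vector of weight $-1$ and hence zero, but $\{e_{\delta_1-\epsilon},f_{\delta_1-\epsilon}\}v_\lambda = h_{\delta_1-\epsilon}v_\lambda$ is a nonzero multiple of $a\,v_\lambda$, a contradiction.

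Your sufficiency argument, however, has a genuine gap. You assert that $L_\lambda$ ``appears as a composition factor'' of $L_\mu\otimes\mathbb{C}^{3|2m}$, but you only know that $\lambda$ is a \emph{weight} of this tensor product, which is far weaker. The implication is immediate only when $\lambda$ is the highest weight, i.e.\ when $\lambda=\mu+\delta_1$; but you cannot always arrange this while keeping $\mu$ in $\mathcal{P}$ (take $\lambda=(1,1,\ldots,1;0)$, where $\lambda-\delta_1$ fails dominance and the only admissible choice is $\mu=\lambda-\delta_m$). For such $\mu$ one must actually produce a singular vector of weight $\lambda$ in $L_\mu\otimes\mathbb{C}^{3|2m}$, or otherwise bound the $\lambda$-weight multiplicity against the contributions of composition factors $L_\nu$ with $\nu>\lambda$. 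The paper establishes precisely this multiplicity-one statement as Lemma~4.1, but that lemma is proved via the cohomological character identity (2.31) and already presupposes Theorem~2.1, so invoking it here would be circular. Without an independent argument for the composition-factor claim, the induction does not close.
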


\vspace{0.3cm} Denote
\begin{equation}
\mathcal {P}=\{\lambda\in\mathfrak{h}^*\mid \mbox{$\lambda$ is
integral and satisfies the conditions in Theorem 2.1}\}.
\end{equation}
The weights in $\mathcal{P}$ are called {\em dominant integral}.

\subsection{Generalized Verma modules}
For any $\lambda\in \mathfrak{h}^*$, we denote by $L^{(0)}_\lambda$
the irreducible $\mathfrak{g}_0$- module with highest weight
$\lambda$. Denote
\begin{equation}
\mathcal {P}^{(0)}=\{\lambda\in\mathfrak{h}^*\mid
\mbox{$L_\lambda^{(0)}$ is finite-dimensional}\}.
\end{equation}
It is obvious that $\mathcal {P}\subset \mathcal {P}^{(0)}$.

For any $\lambda\in \mathcal {P}^{(0)}$, extend $L^{(0)}_\lambda$ to
a $\mathfrak{g}_0\oplus\mathfrak{u}$-module by putting
$\mathfrak{u}L^{(0)}_\lambda=0$. Then the {\em generalized Verma
module} $M_\lambda$ is defined as the induced module
\begin{equation}
M_\lambda=\mbox{Ind}^{\mathfrak{g}}_{\mathfrak{g}_0\oplus\mathfrak{u}}L^{(0)}_\lambda\cong
{U}(\mathfrak{u}^-)\otimes_{\mathbb{C}}L^{(0)}_\lambda.
\end{equation}
It is clear that $L_\lambda$ is the unique irreducible quotient
module of $M_\lambda$.

\subsection{Atypical weights}
Let $\rho_0$ (resp. $\rho_1$) be half the sum of positive even
(resp. odd) roots, and let $\rho=\rho_0-\rho_1$. Then
\begin{equation}
\rho=(m-\frac{3}{2})\delta_1+(m-\frac{5}{2})\delta_2+\cdots+
\frac{1}{2}\delta_{m-1}-\frac{1}{2}\delta_m+\frac{1}{2}\epsilon.
\end{equation}

A weight $\lambda\in \mathfrak{h}^*$ is called {\em atypical} if
there is a positive odd root $\alpha\in\bar{\Delta}_1^+$ such that
\begin{equation}
(\lambda+\rho,\alpha)=0.
\end{equation} Sometimes we also call it {\em $\alpha$-atypical} to emphasize the odd root $\alpha$. Otherwise, we call $\lambda$ {\em
typical}.

Note that if $\lambda\in\mathcal {P}$ is
$(\delta_t-\epsilon)$-atypical for some $1\leq t\leq m$, then it
must be that $t=m$ and
$\lambda_1\geq\lambda_2\geq\cdots\geq\lambda_{m-1}\geq\lambda_m=\lambda_0=0$.
Such a weight $\lambda$ is called a \emph{tail atypical weight}. A
weight $\lambda\in\mathcal {P}$ which is not a tail atypical weight,
is called a \emph{non-tail atypical weight}.

For convenience, sometimes we use the \emph{$\rho$-translated}
notation $\widetilde{\lambda}$ of a weight $\lambda$, which is
defined by
\begin{equation}
\widetilde{\lambda}=\lambda+\rho=(\widetilde{\lambda}_1,\widetilde{\lambda}_2,\ldots,\widetilde{\lambda}_m;\widetilde{\lambda}_0)
\end{equation}

Assume $\lambda\in \mathfrak{h}^*$ is $\alpha$-atypical with
$\alpha=\delta_t+\epsilon$ or $\delta_t-\epsilon$. Denote
\begin{equation}
\overline{\lambda}=[|\widetilde{\lambda}_1|,\ldots,|\widetilde{\lambda}_{t-1}|,
|\widetilde{\lambda}_{t+1}|,\ldots,|\widetilde{\lambda}_m|],
\end{equation} which is called the \emph{atypical type} of $\lambda$.

\subsection{Weyl group} Let $\mathcal {W}$ be the Weyl group of
$\mathfrak{g}$ (i.e. the Weyl group of Lie algebra
$\mathfrak{g}_{\bar{0}}$) and $\ell: \mathcal {W}\rightarrow
\mathbb{N}$ be the length function. Let $S_{k}\ltimes
\mathbb{Z}_2^{k}\subset\mathcal {W}$ be the Weyl group of type $C_k$
($1\leq k\leq m$). That is, elements in $S_k$ permute the
coefficients of $\delta_1,\ldots,\delta_k$ and elements in
$\mathbb{Z}_2^{k}$ show how to change the signs of the coefficients
of $\delta_1, \ldots,\delta_k$.

For any $1\leq k\leq m$, denote
\begin{equation}\label{def}
\Gamma_k:=\{\sigma\in S_{k}\ltimes
\mathbb{Z}_2^{k}\mid
\sigma(\mu)_1>\sigma(\mu)_2>\cdots>\sigma(\mu)_{k}\},
\end{equation} where $\mu=k\delta_1+(k-1)\delta_2+\cdots+\delta_k$ and $\sigma(\mu)_i$ is the
coefficient of $\delta_i$ in $\sigma(\mu)$.

\begin{remark}\label{r3.1}
  Note that $\sigma=\omega\cdot\varepsilon\in\Gamma_{m-1}$ is determined by $\varepsilon$ uniquely,
  where $\omega\in S_{m-1}$ and $\varepsilon\in\mathbb{Z}_2^{m-1}$.
  Hence there is a 1-1 correspondence between the elements in $\Gamma_{m-1}$ and the self-conjugate
  partitions with length smaller than $m$ via
  \begin{equation}\label{r3.2}
    \mu=(j_p-1,\ldots,j_1-1|j_p-1,\ldots,j_1-1)\longmapsto\sigma_{\mu}=\omega_{\mu}\cdot\varepsilon_{\mu},
  \end{equation} where $1\leq j_1\leq\cdots\leq m-1$ and $\varepsilon_{\mu}=(\varepsilon_{\mu,1},\ldots,\varepsilon_{\mu,m-1})$
   with \begin{eqnarray}\label{r3.3}
    \varepsilon_{\mu,s}=\left\{\begin{array}{ll}
      -1& \mbox{if}\ \  s\in\{m-j_1,\ldots,m-j_p\}\\
      1& \mbox{otherwise}.
    \end{array}\right.
  \end{eqnarray}
\end{remark}

\subsection{Central characters and blocks}
Denote by $Z(\mathfrak{g})$ the central of the universal enveloping
algebra $U(\mathfrak{g})$. Recall that a \emph{central character}
defined by $\lambda\in\mathfrak{h}^*$ is a homomorphism
$\chi_\lambda: Z(\mathfrak{g})\rightarrow\mathbb{C}$ such that each
element $z\in Z(\mathfrak{g})$ acts on $L_\lambda$ as the scaler
$\chi_\lambda(z)$.
\begin{lemma} {\em (See. \cite{Se2})}
For any $\lambda,\mu\in\mathcal {P}^{(0)}$, $\chi_\lambda=\chi_\mu$
implies that $\overline{\lambda}=\overline{\mu}$ if $\lambda,\mu$
are both atypical or that there exists $w\in\mathcal {W}$ such that
$\widetilde{\lambda}=w(\widetilde{\mu})$ if $\lambda,\mu$ are both
typical.
\end{lemma}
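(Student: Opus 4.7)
The plan is to derive the lemma from the Kac--Sergeev description of the image of the super Harish-Chandra homomorphism for basic classical Lie superalgebras of type II. Let $\gamma:Z(\mathfrak{g})\to S(\mathfrak{h})$ denote the $\rho$-shifted projection arising from the decomposition $U(\mathfrak{g})=U(\mathfrak{h})\oplus(\mathfrak{n}^{-}U(\mathfrak{g})+U(\mathfrak{g})\mathfrak{n}^{+})$. Evaluating on a highest weight vector of $M_\lambda$ shows $\chi_\lambda(z)=\gamma(z)(\widetilde{\lambda})$, so the hypothesis $\chi_\lambda=\chi_\mu$ is equivalent to $p(\widetilde{\lambda})=p(\widetilde{\mu})$ for every $p\in\gamma(Z(\mathfrak{g}))$. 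This image is known to consist of precisely those $\mathcal{W}$-invariant polynomials on $\mathfrak{h}^{*}$ satisfying the supersymmetry property $p(\nu+s\alpha)=p(\nu)$ whenever $\alpha=\delta_{i}\pm\epsilon$ is an isotropic odd root with $(\nu,\alpha)=0$ and $s\in\mathbb{C}$.

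For the typical case, a generic typical $\widetilde{\lambda}$ is orthogonal to no isotropic odd root, so the supersymmetry relation imposes no new constraint locally and the $\mathcal{W}$-invariant polynomials alone separate $\mathcal{W}$-orbits; hence $\widetilde{\lambda}\in\mathcal{W}\cdot\widetilde{\mu}$. For the atypical case, I would pick an atypical root $\alpha=\delta_{t}\pm\epsilon$ of $\lambda$, so that $(\widetilde{\lambda},\alpha)=0$. Invoking supersymmetry along $\alpha$ with $s$ chosen to annihilate $\widetilde{\lambda}_{0}$ (and thus simultaneously $\widetilde{\lambda}_{t}$), and reducing $\mu$ in the same way along one of its atypical roots, one obtains two weights supported on the orthogonal complement of $\alpha$ whose central characters (now essentially typical on that complement) agree. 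The typical argument applied inside this complement forces the $\mathcal{W}$-orbits of the residual coordinates to coincide; since the $C_{m}$-type part of $\mathcal{W}$ permutes the $\delta_{i}$ independently and flips their signs, the resulting orbit invariant is exactly the unordered multiset of absolute values, i.e.\ $\overline{\lambda}=\overline{\mu}$.

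The main obstacle is verifying that the atypical reduction is well defined and independent of the choice of atypical root when $\lambda$ has several atypicalities, so that $\overline{\lambda}$ is genuinely an invariant of $\chi_\lambda$. This is handled by the mutual compatibility of the supersymmetry relations associated to distinct isotropic odd roots within $\gamma(Z(\mathfrak{g}))$, which is the content of Sergeev's structural description. An auxiliary point is that the reduction may be combined with a $\mathcal{W}$-element that moves the chosen atypical root to the canonical position $\delta_{m}\pm\epsilon$ without disturbing the multiset of remaining coordinates, matching the definition of $\overline{\lambda}$ given in Section 2.5.
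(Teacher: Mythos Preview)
The paper does not give its own proof of this lemma: it is simply quoted from Serganova \cite{Se2} with no argument, so there is no ``paper's proof'' to compare against. Your sketch via the super Harish--Chandra homomorphism and the Kac--Sergeev description of $\gamma(Z(\mathfrak{g}))$ as the algebra of $\mathcal{W}$-invariant supersymmetric polynomials is exactly the standard route that underlies the cited result, and in outline it is correct.

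One point deserves tightening. In the typical case you assert that because a generic typical $\widetilde{\lambda}$ is orthogonal to no isotropic odd root, ``the supersymmetry relation imposes no new constraint locally and the $\mathcal{W}$-invariant polynomials alone separate $\mathcal{W}$-orbits.'' This phrasing conflates two things: the supersymmetry condition \emph{does} cut down the algebra of $\mathcal{W}$-invariants globally, so you cannot simply invoke Chevalley's theorem for $\mathfrak{g}_{\bar 0}$. What one actually needs is the statement (which is part of the Kac--Sergeev/Serganova theory you are citing) that two weights have the same central character if and only if they lie in the same orbit under the groupoid generated by $\mathcal{W}$ and the translations $\nu\mapsto\nu+s\alpha$ along isotropic $\alpha$ with $(\nu,\alpha)=0$. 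For typical $\widetilde{\lambda},\widetilde{\mu}$ the isotropic translations are never applicable along any chain connecting them, and one is left with a pure $\mathcal{W}$-relation. If you make this explicit, the typical case is clean.

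For the atypical case your reduction is fine, and the ``main obstacle'' you flag (independence of the choice of atypical root) is essentially vacuous here: for $\mathfrak{osp}(3|2m)$ a weight in $\mathcal{P}^{(0)}$ can be atypical for at most one $\delta_t\pm\epsilon$ up to the sign ambiguity in $\widetilde{\lambda}_0$, and the two choices give the same deleted coordinate $t$, hence the same $\overline{\lambda}$.
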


Suppose $\lambda\in\mathcal {P}$ is typical, then for any
$\mu\in\mathcal {P}$, $\chi_\lambda=\chi_\mu$ if and only if
$\lambda=\mu$.

For any atypical weight $\lambda\in\mathcal {P}$, we denote by $\lambda^T$ the
unique tail atypical weight with
$\chi_\lambda=\chi_{\lambda^T}$. It is obvious that $\lambda=\lambda^T$ if $\lambda$ is a tail atypical weight. For a non-tail
atypical weight $\lambda\in\mathcal {P}$, one can check that
\begin{equation}
\lambda^T=\sum_{i=1}^{k-1}\lambda_i\delta_i+\sum_{i=k}^{m-1}\lambda_{i+1}\delta_i\quad\mbox{if
$\lambda$ is $(\delta_k+\epsilon)$-atypical.}
\end{equation}

For any tail atypical weight $\lambda$, denote
\begin{equation}
B_\lambda=\{\mu\in\mathcal {P}\mid \mu^T=\lambda\}.
\end{equation}
Clearly,
\begin{equation}
\mathcal {P}=\bigcup_{\lambda\mbox{\ tail atypical}}B_\lambda,
\quad\mbox{which is a disjoint union.}
\end{equation}

Take any non-tail $(\delta_k+\epsilon)$-atypical weight $\lambda$
$(1\leq k\leq m)$ and assume that $t$ is the maximal number with
$\lambda_t=\lambda_k$. Define
\begin{equation}
\varphi(\lambda)=\left\{\begin{array}{ll}\lambda-\delta_m=\lambda^T,&\quad\mbox{if
$\lambda_0=0$}\\\lambda-\sum_{i=k}^t\delta_i-(t-k+1)\epsilon,
&\quad\mbox{otherwise}.
\end{array}\right.
\end{equation}

It is obvious that $\varphi: \mathcal {P}\setminus\{\mbox{tail
atypical weights}\}\rightarrow\mathcal {P}$ is injective. Moreover,
one can check that $\varphi(\mu)\in B_\lambda$ for any tail typical
weight $\lambda$ and any $\mu\in B_\lambda$.

\subsection{Characters for typical weights}
\begin{theorem}\label{T2.8.1}\em{(Kac\cite{K2})}
A weight $\lambda\in\mathcal {P}$ is typical if and only if
\begin{equation}\label{2.8.1}
ch\
L_{\lambda}=\frac{\prod_{\alpha\in\Delta_1^+}(e^{\frac{\alpha}{2}}+e^{-\frac{\alpha}{2}})}
{\prod_{\alpha\in\Delta_0^+}(e^{\frac{\alpha}{2}}-e^{-\frac{\alpha}{2}})}
\sum_{w\in \mathcal {W}}(-1)^{l(w)}e^{w(\lambda+\rho)}.
\end{equation}

We rewrite this theorem as the following lemma.
\end{theorem}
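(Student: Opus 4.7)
The plan is to prove both directions of the equivalence by constructing a \emph{Kac module} $K_\lambda$ whose character coincides with the right-hand side of (2.8.1), and then characterizing typicality by when the natural surjection $K_\lambda \twoheadrightarrow L_\lambda$ is an isomorphism.

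First, I would introduce $K_\lambda$ as the maximal finite-dimensional quotient of the generalized Verma module $M_\lambda$ from Section 2.4. A PBW argument together with the fact that $\mathfrak{g}_{-2}$ acts locally nilpotently on any finite-dimensional $\mathfrak{g}$-module yields a $\mathfrak{g}_0$-module isomorphism $K_\lambda\cong\Lambda(\mathfrak{g}_{-1})\otimes_{\mathbb{C}}L^{(0)}_\lambda$. To compute $\mbox{ch}\,K_\lambda$, I would multiply $\mbox{ch}\,\Lambda(\mathfrak{g}_{-1})=\prod_{\alpha\in\Delta_1^+}(1+e^{-\alpha})$ by the Weyl character formula for $L^{(0)}_\lambda$ (with respect to the small Weyl group of $\mathfrak{g}_0$), clear the $\mathfrak{g}_2$-denominator, and apply the classical Weyl denominator identity for the symplectic subalgebra to promote the Weyl sum to one over the full group $\mathcal{W}$. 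Using $(1+e^{-\alpha})=e^{-\alpha/2}(e^{\alpha/2}+e^{-\alpha/2})$, the identity $\rho=\rho_0-\rho_1$, the $\mathcal{W}$-invariance of $\prod_{\alpha\in\Delta_1^+}(e^{\alpha/2}+e^{-\alpha/2})$, and the $\mathcal{W}$-antiinvariance of $\prod_{\alpha\in\Delta_0^+}(e^{\alpha/2}-e^{-\alpha/2})$, this reorganizes into the right-hand side of (2.8.1).

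Once the formula is identified as $\mbox{ch}\,K_\lambda$, the theorem reduces to showing that $K_\lambda$ is irreducible if and only if $\lambda$ is typical. For the forward direction, I would analyze the Shapovalov bilinear form on $K_\lambda$: any primitive vector of weight strictly below $\lambda$ must lie in the radical of this form, and a direct computation shows the relevant determinant carries a factor proportional to $\prod_{\alpha\in\Delta_1^+}(\lambda+\rho,\alpha)$, which is non-zero precisely when $\lambda$ is typical; thus no proper submodule can exist and $K_\lambda=L_\lambda$. For the converse, atypicality $(\lambda+\rho,\alpha)=0$ for some $\alpha\in\Delta_1^+$ allows one to exhibit a singular vector in $K_\lambda$ of weight $\lambda-\alpha$, built from the PBW monomial in $U(\mathfrak{u}^-)$ dual to $\alpha$ acting on the highest weight vector; this shows $L_\lambda$ is a proper quotient of $K_\lambda$, so $\mbox{ch}\,L_\lambda$ is strictly smaller than the right-hand side of (2.8.1), and the formula fails.

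The main obstacle will be the Shapovalov-form calculation for the forward direction: controlling all possible primitive vectors in every graded piece of $K_\lambda\cong\Lambda(\mathfrak{g}_{-1})\otimes L^{(0)}_\lambda$ under the typicality hypothesis is significantly more delicate than the purely formal character manipulation, and it constitutes the technical core of Kac's original argument. Everything else in the proof is bookkeeping with the Weyl denominator identity and standard properties of induced modules.
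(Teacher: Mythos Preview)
The paper does not prove Theorem~\ref{T2.8.1} at all: it is quoted verbatim from Kac~\cite{K2} and used as a black box. The only argument that appears in the paper at this point is the proof of the \emph{subsequent} Lemma~\ref{T2.8.2}, which takes Kac's formula~\eqref{2.8.1} as given and rewrites it as an alternating sum of generalized Verma characters $\sum_{\sigma\in\Gamma_m}(-1)^{\ell(\sigma)}\mbox{ch }M_{\sigma(\lambda+\rho)-\rho}$ via a straightforward manipulation of the Weyl denominator for $\mathfrak{g}_{\bar 0}=\mathfrak{sp}(2m)\oplus\mathfrak{so}(3)$. So there is no ``paper's own proof'' of the cited Kac theorem for your proposal to be compared against.

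Your sketch is a fair outline of Kac's original argument, but one point deserves care in the type~II setting of $\mathfrak{osp}(3|2m)$. The generalized Verma module $M_\lambda$ here is infinite-dimensional (because $\mathfrak{u}^-$ contains the even piece $\mathfrak{g}_{-2}$), and the Kac module is by definition its maximal finite-dimensional quotient, as the paper itself notes in the introduction. The identification $K_\lambda\cong\Lambda(\mathfrak{g}_{-1})\otimes L^{(0)}_\lambda$ as a $\mathfrak{g}_0$-module is therefore not a one-line PBW consequence: you must show that passing to the finite-dimensional quotient kills exactly the symmetric algebra $S^{>0}(\mathfrak{g}_{-2})$ contribution, which amounts to verifying that the $\mathfrak{g}_{\bar 0}$-highest weight vector generates a finite-dimensional $\mathfrak{g}_{\bar 0}$-module of the expected shape. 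This is true but is an extra step absent in the type~I case you may be modeling your argument on.
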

\begin{lemma}\label{T2.8.2}
A weight $\lambda\in\mathcal {P}$ is typical if and only if
\begin{equation}\label{2.8.2}
\mbox{ch }L_\lambda=\sum_{\sigma\in\Gamma_m}(-1)^{\ell(\sigma)}\mbox{ch
}M_{\sigma(\lambda+\rho)-\rho}.
\end{equation}
\end{lemma}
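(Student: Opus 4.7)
The plan is to transform Kac's character formula \eqref{2.8.1} directly into the form \eqref{2.8.2} by a character computation, so that the two typicality characterizations in Theorem \ref{T2.8.1} and Lemma \ref{T2.8.2} become literally the same condition on $\lambda$. First I will expand $\mbox{ch }M_\mu$ through the PBW isomorphism $M_\mu\cong U(\mathfrak{u}^-)\otimes L^{(0)}_\mu$ together with Weyl's character formula for $L^{(0)}_\mu$ with respect to $\mathcal{W}_0:=S_m\times\langle s_\epsilon\rangle$, the Weyl group of $\mathfrak{g}_0\cong\mathfrak{gl}(m)\oplus\mathfrak{sl}(2)$. Since $\Delta(\mathfrak{g}_2)\sqcup\Delta^+(\mathfrak{g}_0)=\Delta_0^+$, the denominators amalgamate and one obtains
\[
\mbox{ch }M_\mu = D\,e^{-\rho'}\sum_{w\in\mathcal{W}_0}(-1)^{\ell(w)}e^{w(\mu+\rho')},\quad D:=\frac{\prod_{\alpha\in\Delta_1^+}(1+e^{-\alpha})}{\prod_{\alpha\in\Delta_0^+}(1-e^{-\alpha})},
\]
where $\rho'$ is half the sum of positive roots of $\mathfrak{g}_0$. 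The analogous manipulation $\prod_{\alpha\in\Delta_j^+}(e^{\alpha/2}\pm e^{-\alpha/2}) = e^{\rho_j}\prod(1\pm e^{-\alpha})$ recasts \eqref{2.8.1} as $\mbox{ch }L_\lambda = D\,e^{-\rho}\sum_{u\in\mathcal{W}}(-1)^{\ell(u)}e^{u(\lambda+\rho)}$.

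It therefore suffices, independently of typicality, to establish
\[
\sum_{\sigma\in\Gamma_m}(-1)^{\ell(\sigma)}\mbox{ch }M_{\sigma(\lambda+\rho)-\rho} = D\,e^{-\rho}\sum_{u\in\mathcal{W}}(-1)^{\ell(u)}e^{u(\lambda+\rho)}.
\]
Substituting $\mu=\sigma(\lambda+\rho)-\rho$ in the expansion of $\mbox{ch }M_\mu$, I will lean on the observation that $\rho-\rho'=\frac{m-2}{2}(\delta_1+\cdots+\delta_m)$: this weight has no $\epsilon$-component and is symmetric in the $\delta_i$, hence is fixed by every element of $\mathcal{W}_0$. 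So $w(\rho'-\rho)=\rho'-\rho$, and the substitution collapses to
\[
\mbox{ch }M_{\sigma(\lambda+\rho)-\rho} = D\,e^{-\rho}\sum_{w\in\mathcal{W}_0}(-1)^{\ell(w)}e^{w\sigma(\lambda+\rho)}.
\]

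What remains after summing over $\sigma\in\Gamma_m$ is the identity $\sum_{w\in\mathcal{W}_0,\,\sigma\in\Gamma_m}(-1)^{\ell(w)+\ell(\sigma)}e^{w\sigma(\lambda+\rho)} = \sum_{u\in\mathcal{W}}(-1)^{\ell(u)}e^{u(\lambda+\rho)}$, and this is the step I expect to be the main obstacle; it reduces to two facts. The first is that the multiplication map $\mathcal{W}_0\times\Gamma_m\to\mathcal{W}$ is a bijection: decomposing $\mathcal{W}=(S_m\ltimes\mathbb{Z}_2^m)\times\langle s_\epsilon\rangle$, the $\langle s_\epsilon\rangle$-factor passes through because $\Gamma_m$ acts trivially on $\epsilon$, and by the definition \eqref{def} the set $\Gamma_m$ picks, in each $S_m$-coset of $S_m\ltimes\mathbb{Z}_2^m$, the unique element sending $m\delta_1+\cdots+\delta_m$ to a weight with strictly decreasing $\delta$-coordinates, yielding $|\Gamma_m|=2^m=[S_m\ltimes\mathbb{Z}_2^m:S_m]$. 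The second is multiplicativity of signs, $(-1)^{\ell(w\sigma)}=(-1)^{\ell(w)}(-1)^{\ell(\sigma)}$, which holds because $(-1)^\ell=\det_{\mathfrak{h}^*}$ is a group homomorphism on the Coxeter group $\mathcal{W}$. Combining these identifies the right-hand sides of \eqref{2.8.1} and \eqref{2.8.2}, completing the proof.
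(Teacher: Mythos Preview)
Your proof is correct and follows essentially the same approach as the paper: both identify the right-hand sides of \eqref{2.8.1} and \eqref{2.8.2} via the coset decomposition $\mathcal{W}=\mathcal{W}_0\cdot\Gamma_m$ and then invoke Theorem~\ref{T2.8.1}. The only cosmetic difference is that the paper first splits off the $\mathfrak{sl}(2)$-factor explicitly (computing $\sum_{j=0}^{2\lambda_0}e^{(\lambda_0-j)\epsilon}$ by hand) and then decomposes $S_m\ltimes\mathbb{Z}_2^m$ using $\rho_{\mathfrak{gl}(m)}$, whereas you handle both factors of $\mathfrak{g}_0$ at once via the $\mathcal{W}_0$-invariance of $\rho-\rho'$; the underlying argument is the same.
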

\begin{proof}
By Theorem \ref{T2.8.1}, one has that $\lambda\in\mathcal {P}$ is typical if and only if
\begin{eqnarray}\label{2.8.3}
ch\
L_{\lambda}=\frac{\prod\limits_{\alpha\in\Delta_1^+}(1+e^{-\alpha})}{\prod\limits_{1\leq
i\leq j\leq m}(1-e^{-(\delta_i+\delta_j)})} \frac{\sum\limits_{w\in
\mathcal
{W}}(-1)^{l(w)}e^{w(\lambda+\rho)-\rho}}{(1-e^{-\epsilon})\prod\limits_{1\leq
i<j\leq m}(1-e^{-(\delta_i-\delta_j)})}.
\end{eqnarray}

For convenience, we denote $\mu'=\sum_{j=1}^m\mu_j\delta_j$ with
$\mu=\sum_{j=1}^m\mu_j\delta_j+\mu_0\epsilon$, and denote
$\rho_{\mathfrak{gl}(m)}=\sum_{j=1}^m(m-j)\delta_j$ being half the
sum of positive roots of $\mathfrak{sl}(m)$. Since $\mathcal
{W}=\mathcal {W}_{C_m}\times\mathcal {W}_{B_1}$, we have
\begin{eqnarray}\label{2.8.4}\\\nonumber
\sum\limits_{w\in \mathcal {W}}(-1)^{l(w)}e^{w(\lambda+\rho)-\rho}=
(e^{\lambda_0\epsilon}-e^{-(\lambda_0+1)\epsilon})\sum\limits_{w\in
S_m\ltimes\mathbb{Z}_2^m}(-1)^{l(w)}e^{w(\lambda'+\rho')-\rho'}.
\end{eqnarray} Thus
\begin{eqnarray*}
 ch\ L_{\lambda}&=&\frac{\prod\limits_{\alpha\in\Delta_1^+}(1+e^{-\alpha})}{\prod\limits_{1\leq
i\leq j\leq
m}(1-e^{-(\delta_i+\delta_j)})}\sum_{j=0}^{2\lambda_0}e^{(\lambda_0-j)\epsilon}
\frac{\sum\limits_{w\in
S_m\ltimes\mathbb{Z}_2^m}(-1)^{l(w)}e^{w(\lambda'+\rho')-\rho'}}{\prod\limits_{1\leq
i<j\leq m}(1-e^{-(\delta_i-\delta_j)})}\\
&=&\frac{\prod\limits_{\alpha\in\Delta_1^+}(1+e^{-\alpha})}{\prod\limits_{1\leq
i\leq j\leq
m}(1-e^{-(\delta_i+\delta_j)})}\sum_{j=0}^{2\lambda_0}e^{(\lambda_0-j)\epsilon}
\sum_{\sigma\in\Gamma_m}(-1)^{l(\sigma)}\frac{\sum\limits_{w\in
S_m}(-1)^{l(w)}e^{w(\sigma(\lambda'+\rho')-\rho'+\rho_{\mathfrak{gl}(m)})}}{e^{\rho_{\mathfrak{gl}(m)}}\prod\limits_{1\leq
i<j\leq m}(1-e^{-(\delta_i-\delta_j)})}\\&=&
\frac{\prod\limits_{\alpha\in\Delta_1^+}(1+e^{-\alpha})}{\prod\limits_{1\leq
i\leq j\leq
m}(1-e^{-(\delta_i+\delta_j)})}\sum_{\sigma\in\Gamma_m}(-1)^{l(\sigma)}ch\
L^{(0)}_{\sigma(\lambda+\rho)-\rho}\\&=&
\sum_{\sigma\in\Gamma_m}(-1)^{l(\sigma)}ch\
M_{\sigma(\lambda+\rho)-\rho}
\end{eqnarray*}
\end{proof}

\subsection{Cohomology and character}
The space of $q$-dimensional cochains of the Lie superalgebra
$\mathfrak{u}=\mathfrak{u}_{\bar{0}}\otimes\mathfrak{u}_{\bar{1}}$
with coefficients in the module $L_\lambda$ is given by
\begin{equation}
C^q(\mathfrak{u};
L_\lambda)=\bigoplus_{q_0+q_1=q}\mbox{Hom}(\wedge^{q_0}\mathfrak{u}_{\bar{0}}\otimes
S^{q_1}\mathfrak{u}_{\bar{1}}, L_\lambda).
\end{equation}
The differential $d: C^q(\mathfrak{u}; L_\lambda)\rightarrow
C^{q+1}(\mathfrak{u}; L_\lambda)$ is defined by
\begin{equation}
\begin{array}{c}
d c(\xi_1,\ldots,\xi_{q_0},\eta_1,\ldots,\eta_{q_1})\\=\sum_{1\leq s
\leq t\leq
q_0}(-1)^{s+t-1}c([\xi_s,\xi_t],\xi_1,\ldots,\widehat{\xi_s},\ldots,\widehat{\xi_t},\ldots,
\xi_{q_0},\eta_1,\ldots,\eta_{q_1})
\\
+\sum_{s=1}^{q_0}\sum_{t=1}^{q_1}(-1)^{s-1}c(\xi_1,\ldots,\widehat{\xi_s},\ldots,\xi_{q_0},
[\xi_s,\eta_t],\eta_1,\ldots,\widehat{\eta_t},\ldots,\eta_{q_1})\\
+\sum_{1\leq s\leq t\leq
q_1}c([\eta_s,\eta_t],\xi_1,\ldots,\xi_{q_0},
\eta_1,\ldots,\widehat{\eta_s},\ldots,\widehat{\eta_t},\ldots,\eta_{q_1})\\
+\sum_{s=1}^{q_0}(-1)^s\xi_s c(\xi_1,\ldots,\widehat{\xi_s},\ldots,\xi_{q_0},\eta_1,\ldots,\eta_{q_1})\\
+(-1)^{q_0-1}\sum_{s=1}^{q_1}\eta_s
c(\xi_1,\ldots,\xi_{q_0},\eta_1,\ldots,\widehat{\eta_s},\ldots,\eta_{q_1})
\end{array}\end{equation}
where $c\in C^{q}(\mathfrak{u}; L_\lambda)$,
$\xi_1,\ldots,\xi_{q_0}\in\mathfrak{u}_{\bar{0}}$,
$\eta_1,\ldots,\eta_{q_1}\in\mathfrak{u}_{\bar{1}}$.

The cohomology of $\mathfrak{u}$ with coefficients in the module
$L_\lambda$ is the cohomology group of the complex
$C=(\{C^q(\mathfrak{u}; L_\lambda)\},d)$, and it is denoted by
$H^q(\mathfrak{u}, L_\lambda)$.

Imitating the argument in \cite{L}, one can obtain
\begin{equation}
\mbox{ch } L_\lambda=\mbox{ch }
M_\lambda+\sum_{\nu\prec\lambda}\sum_{i=0}^\infty(-1)^i[H^i(\mathfrak{u},
L_\lambda): L_\nu^{(0)}]\mbox{ch } M_\nu,
\end{equation} where $\nu\prec\lambda$ means that $\chi_\lambda=\chi_\nu$ and $\nu<\lambda$ (i.e. $\lambda-\mu$ is a
$\mathbb{Z}_{\geq 0}$-linear sum of positive roots), and
$[H^i(\mathfrak{u}, L_\lambda): L_\nu^{(0)}]$ is the multiplicity of
$L_\nu^{(0)}$ in the cohomology group regarded as a
$\mathfrak{g}_0$-module.

\begin{remark}
In the above expression, any coefficient of $\mbox{ch } M_\nu$ is an
integral number. In particular, the coefficient of $\mbox{ch }
M_\lambda$ is $1$.
\end{remark}

\section{Character formulae for tail atypical weights}

\subsection{Natural module}
Consider the natural $\mathfrak{g}$-module $L_{\delta_1}\simeq
\mathbb{C}^{3|2m}$. It is known that ${L_{\delta_1}}^*\simeq
L_{\delta_1}$ and that the set of weights of $L_{\delta_1}$ is $\{0,
\pm\delta_1,\ldots,\pm\delta_m,\pm\epsilon\}$. Moreover, as a
$\mathfrak{g}_0$-module,
\begin{equation}
L_{\delta_1}\simeq L_{\delta_1}^{(0)}\oplus
L_{-\delta_m}^{(0)}\oplus L_\epsilon^{(0)}.
\end{equation}

\subsection{Weights set $\mathcal {P}_\lambda$ and $\mathcal {P}_\lambda^{(0)}$}
For any $\lambda\in\mathcal {P}$, denote
\begin{equation}
\mathcal{P}_\lambda=\mathcal{P}_{\lambda^+}\cup\mathcal{P}_{\lambda^-}
\end{equation}
where
\begin{equation}
\mathcal{P}_{\lambda^+}=\left\{\begin{array}{ll}
\{\lambda+\delta_1,\lambda+\delta_{2},\ldots,\lambda+\delta_m,
\lambda+\epsilon\}\cap
\mathcal{P}, & (\lambda_0=0);\\
\{\lambda+\delta_1,\lambda+\delta_{2},\ldots,\lambda+\delta_m,
\lambda, \lambda+\epsilon\}\cap \mathcal{P}, &
(\lambda_0\neq0).\end{array}\right.
\end{equation} and
\begin{equation}
\mathcal{P}_{\lambda^-}=\left\{\begin{array}{ll}
\{\lambda-\delta_1,\lambda-\delta_{2},\ldots,\lambda-\delta_m,
\lambda-\epsilon\}\cap
\mathcal{P}, & (\lambda_0=0);\\
\{\lambda-\delta_1,\lambda-\delta_{2},\ldots,\lambda-\delta_m,
\lambda, \lambda-\epsilon\}\cap \mathcal{P}, &
(\lambda_0\neq0).\end{array}\right.
\end{equation}

Let $\mathcal {P}^{(0)}$ take the place of $\mathcal {P}$ above,
then there come the definitions of ${P}_\lambda^{(0)}$,
$\mathcal{P}_{\lambda^+}^{(0)}$ and $\mathcal{P}_{\lambda^+}^{(0)}$.

The following statement is standard in the theory of the classical
finite-dimensional semisimple Lie algebras: for any
$\lambda\in\mathcal {P}^{(0)}$,
\begin{equation}
L_\lambda^{(0)}\otimes (L_{\delta_1}^{(0)}\oplus
L_{-\delta_m}^{(0)}\oplus
L_\epsilon^{(0)})=\bigoplus_{\mu\in{P}_\lambda^{(0)}} L_\mu^{(0)}.
\end{equation}

\subsection{Character of $M_\lambda\otimes L_{\delta_1}$}
Thanks to (3.1) and (3.5), we have that for any $\lambda\in\mathcal
{P}^{(0)}$, as a $\mathfrak{g}_0$-module,
\begin{eqnarray}
\quad\quad\quad M_\lambda\otimes
L_\delta&=&({U}(\mathfrak{u}^-)\otimes_{\mathbb{C}}
L^{(0)}_\lambda)\otimes (L_{\delta_1}^{(0)}\oplus
L_{-\delta_m}^{(0)}\oplus
L_\epsilon^{(0)})\\\nonumber&\cong&{U}(\mathfrak{u}^-)\otimes_{\mathbb{C}}(L^{(0)}_\lambda\otimes
(L_{\delta_1}^{(0)}\oplus L_{-\delta_m}^{(0)}\oplus
L_\epsilon^{(0)}))\\\nonumber
&=&{U}(\mathfrak{u}^-)\otimes_{\mathbb{C}}(\bigoplus_{\mu\in\mathcal
{P}_\lambda^{(0)}}L^{(0)}_\mu)\\\nonumber&=&\bigoplus_{\mu\in\mathcal
{P}_\lambda^{(0)}} ({U}(\mathfrak{u}^-)\otimes_{\mathbb{C}}
L^{(0)}_\mu)\\\nonumber &=& \bigoplus_{\mu\in\mathcal
{P}_\lambda^{(0)}} M_{\mu}.
\end{eqnarray}
Hence
\begin{equation}
\mbox{ch }M_\lambda\otimes L_\delta=\sum_{\mu\in\mathcal
{P}_\lambda^{(0)}}\mbox{ch } M_{\mu}.
\end{equation}

\subsection{Convention} From here on, we will always simplify the
character $\mbox{ch }V$ to $V$. It cannot confuse us by context.

\subsection{Character formulae}Firstly, we will recall some notations and
facts about symmetric functions. One can find more material in
\cite{M}.

Denote $A_{\lambda}=\det(x_j^{\lambda_i})_{1\leq i,j\leq m}$ for
$\lambda=(\lambda_1,\ldots,\lambda_m)\in\mathbb{Z}^m$ and
$S_{\lambda}=A_{\lambda+\rho_{\mathfrak{gl}(m)}}/A_{\rho_{\mathfrak{gl}(m)}}$,
where $\rho_{\mathfrak{gl}(m)}=\sum_{j=1}^m(m-j)\delta_j$. Note that
$\lambda$ is not necessary a partition, and if so then we will get
back to the traditional definition of Schur function $S_{\lambda}$.
Thus $S_{\lambda}\neq0$ if and only if there exists $w\in S_m$
satisfying $\lambda_{\sigma(1)}-\sigma(1)+1\geq
\cdots\geq\lambda_{\sigma(m)}-\sigma(m)+m$. Equivalently,
$S_{\lambda}=0$ if and only if there exist $i,j\in\{1,\ldots,m\}$
satisfying that $\lambda_i-i=\lambda_j-j$.

A direct calculation shows that $S_{\lambda}=\det
(h_{\lambda_i+l-i+j})/e_m^l$, where $l\in\mathbb{Z}_{\geq0}$ is any nonnegative integer such that
$\lambda_i+l-i+m\geq0$ for all $i\in\{1,\ldots,m\}$. Here
$h_l=h_l(x_1,\ldots,x_m)$ is the $l$-th complete symmetric
polynomials (i.e. the sum of all monomials of total degree $l$ in
the variables $x_1,\ldots,x_m$), and $e_l=e_l(x_1,\ldots,x_m)$ is the
$l$-th elementary symmetric polynomials in the variables
$x_1,\ldots,x_m$. We set $h_l=e_l=0$ if $l<0$ for convenience.

For each $\sigma\in \Gamma_{m-1}$
(cf. \eqref{def}), define
\begin{equation}\label{3.1}
\flat(\sigma,\lambda)=\left\{\begin{array}{ll} i, & \mbox{if
$i\in\{1,2,\ldots,m-1\}$ is the minimal number such that
$\sigma(\lambda+\rho)_i<-1$};\\ m, & \mbox{if
$\sigma(\lambda+\rho)_{m-1}>-1$}.
\end{array}\right.
\end{equation}
Write
\begin{equation}\label{3.2}
\tau_i:=(i,i+1,i+2,\ldots,m)\in S_m.
\end{equation}

\begin{theorem}\label{T3.1} i).
If $\lambda$ is a tail atypical dominant integral weight, then
\begin{equation}\label{3.3}
L_\lambda=\sum_{\sigma\in \Gamma_{m-1}}\sum_{i=\flat(\sigma,\lambda)}^m
\sum_{j=\max\{0,\frac{1}{2}-\sigma(\lambda+\rho)_{i-1}\}}^{-\frac{3}{2}-\tau_i\sigma(\lambda+\rho)_{i+1}}
(-1)^{\ell(\tau_i\sigma)+j}M_{\tau_i\sigma(\lambda+\rho)-\rho+j\epsilon-j\delta_i},
\end{equation}where $\sigma(\lambda+\rho)_{0}=\infty$ and $\tau_m\sigma(\lambda+\rho)_{m+1}=-\infty$ for convenience.

ii). If $\lambda=\sum_{j=1}^m\lambda_j\delta_j+\lambda_0\epsilon$ is a $(\delta_k+\epsilon)$-atypical dominant
integral weight with $\lambda_k=\cdots\lambda_m=1$ and $\lambda_0=m-k$ (i.e. $\lambda$
is a non-tail atypical weight with $\varphi(\lambda)=\lambda^T$),
then
\begin{equation}\label{3.4}
L_\lambda=L_{\lambda^T}+\sum_{\sigma\in\Gamma_m}(-1)^{\ell(\sigma)}\mbox{ch
}M_{\sigma(\lambda+\rho)-\rho}.
\end{equation}
where $L_{\lambda^T}$ has been got by i).
\end{theorem}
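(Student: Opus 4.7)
The plan is to prove both parts by induction, with the engine being the tensor-product identity (3.7) combined with the block decomposition of Section 2.7. For part i), I would fix a tail atypical weight $\lambda$ and induct on $\mbox{ht}\,\lambda$. The inductive step chooses a smaller tail atypical $\mu$, for instance $\mu=\lambda-\delta_s$ where $s$ is the smallest index with $\lambda_s>\lambda_{s+1}$ (using the convention $\lambda_{m+1}=0$), so that $L_\lambda$ appears as a composition factor of $L_\mu\otimes L_{\delta_1}$; crucially $\mu$ is still tail atypical since $\mu_m=\mu_0=0$. Expanding $\mbox{ch}\,L_\mu$ by the inductive hypothesis and then invoking (3.7) on each generalized Verma summand yields a formal sum of Verma characters for $L_\mu\otimes L_{\delta_1}$. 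On the other hand, I would write $L_\mu\otimes L_{\delta_1}=\sum_\nu d_\nu L_\nu$ as a sum of composition factors and split by central character blocks (atypical blocks handled by induction on height, typical ones by Lemma \ref{T2.8.2}); solving for $\mbox{ch}\,L_\lambda$ should produce the formula (3.3) after Weyl-group bookkeeping.

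The nontrivial base case $\lambda=0$ is the combinatorial identity that expresses $\mbox{ch}\,L_0=1$ as the right-hand side of (3.3). I would attack it by clearing the Weyl denominator $\prod_{\alpha\in\Delta_0^+}(1-e^{-\alpha})$, rewriting each generalized Verma character as an alternating sum over the $\mathfrak{g}_0$-Weyl group, and then exploiting the bijection in Remark 2.1 between $\Gamma_{m-1}$ and self-conjugate partitions: the middle sum over $j$ (indexed by the atypicality parameter $\epsilon-\delta_i$) should telescope within each atypicality stratum, and the outer sum over $\sigma$ should collapse via a BGG-type resolution of the trivial $\mathfrak{g}$-module on the block $B_0$. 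This identity is the principal obstacle; once it is settled, the induction is essentially routine, the only pitfall being that $L_\mu\otimes L_{\delta_1}$ is not semisimple, so block decomposition rather than naive direct-sum decomposition is required.

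For part ii), $\lambda$ lies in the same block $B_{\lambda^T}$ as the tail atypical representative $\lambda^T$, and under the hypothesis $\varphi(\lambda)=\lambda^T$ it is the immediate successor of $\lambda^T$ in that block. The plan is to apply the Weyl-type sum of Lemma \ref{T2.8.2} to $\lambda$ even though $\lambda$ is atypical; the atypicality forces cancellations that collapse the sum to $\mbox{ch}\,L_\lambda+\mbox{ch}\,L_{\lambda^T}$. Concretely, I would run the same tensor-product argument from part i) on $L_{\lambda^T}\otimes L_{\delta_1}$ (or iterated applications when $\lambda-\lambda^T$ has height larger than one), producing a Verma expansion of $\mbox{ch}\,L_\lambda$; subtracting the already-established formula for $\mbox{ch}\,L_{\lambda^T}$ from part i) then yields the right-hand side of (3.4). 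The subtlety here is ruling out contributions from any other atypical $L_\nu\in B_{\lambda^T}$, which should follow from the block ordering, the minimality of $\lambda$ immediately above $\lambda^T$, and the cohomology expansion in Section 2.9 forcing nontrivial $\mbox{ch}\,M_\nu$ coefficients to come only from $\nu\preceq\lambda$ in $B_{\lambda^T}$.
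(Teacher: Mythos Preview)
Your overall strategy---induction on height, tensoring with $L_{\delta_1}$, and separating by central-character blocks---is exactly the paper's. But the step you dismiss as ``Weyl-group bookkeeping'' is the technical heart of the argument, and your plan does not address it. To extract $\mbox{ch}\,L_\lambda$ from the block decomposition of $F_\mu\cdot\mbox{ch}\,L_{\delta_1}$, you must know \emph{a priori} that the right-hand side $F_\nu$ of (\ref{3.3}) satisfies the multiplicative identity
\[
F_\mu\cdot\mbox{ch}\,L_{\delta_1}=\sum_{\nu\in\mathcal{P}_\mu}F_\nu
\]
as a formal identity among generalized Verma characters, independently of the representation-theoretic induction. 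The paper isolates this as a separate result: it first rewrites $F_\lambda$ as a Schur-function expression $C_\lambda$ (Lemma~\ref{T3.2}, using the bijection of Remark~\ref{r3.1}) and then proves $C_\lambda\cdot C_{\delta_1}=\sum_{\nu\in\mathcal{P}_\lambda}C_\nu$ purely in the ring of symmetric functions (Lemma~\ref{27}). That computation is substantial---it requires tracking how the Pieri-type shifts $\pm\delta_l$ interact with the sign changes in $\Gamma_{m-1}$ and handling several degenerate cases where $\lambda\pm\delta_l\notin\mathcal{P}$. Without it your inductive step cannot identify the block components of the Verma expansion with the target formulas. Likewise, the base case $\lambda=0$ (Lemma~\ref{1}) is proved as a generating-function identity over self-conjugate partitions rather than via a BGG-type resolution; your sketch for it is too vague to assess.

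For part ii) your outline is close in spirit, but note a sign: (\ref{3.4}) gives $\sum_{\sigma\in\Gamma_m}(-1)^{\ell(\sigma)}M_{\sigma(\lambda+\rho)-\rho}=L_\lambda-L_{\lambda^T}$, not $L_\lambda+L_{\lambda^T}$. More substantively, the paper's argument here ends with a genuine integer ambiguity---after the tensor-product and block bookkeeping one only knows $L_{\lambda}+xL_{\lambda^T}$ equals the claimed expression with $x\in\{0,1\}$---and this is resolved not by block ordering but by invoking Lemma~\ref{T2.8.2} in the contrapositive: if $x=1$ then $L_\lambda$ would satisfy the typical character formula, contradicting the atypicality of $\lambda$. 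Your plan does not anticipate this last step.
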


We will give several lemmas first.

Denote $F_{\lambda}$ the right
side of \eqref{3.3} or \eqref{3.4}.

If $\lambda$ is a $(\delta_m-\epsilon)$-atypical integral dominant
weight of $\mathfrak{g}$, we define
\begin{eqnarray}\label{3.5}\\\nonumber
  C_{\lambda}=\frac{\prod\limits_{i=1}^m(1+x_iy)\prod\limits_{i=1}^m(1+x_iy^{-1})}
  {\prod\limits_{i=1}^m(1-x_i)\prod\limits_{1\leq i<j\leq m}(1-x_ix_j)}
    \sum_{j=0}^{\infty}\sum_{\mu}(-1)^{j+T(\mu)}S_{(j,\mu)}\sum_{s=0}^{2j}y^{j-s},
\end{eqnarray} where $\mu$ runs over all partitions
in forms
$(j_p-1,\ldots,j_1-1|j_p-1,\ldots,j_1-1)+(\lambda_{m-j_p},\ldots,\lambda_{m-j_1},-\lambda_{m-r_1},\ldots,-\lambda_{m-r_{m-1-p}})$
for any nonnegative integer $p<m$ and strictly increasing
$p$-tuples $(j_1,\ldots,j_p)\in\{1,\ldots,m-1\}^p$. Here the entries of the
strictly increasing $(m-1-p)$-tuples $(r_1,\ldots,r_{m-1-p})$ are in
the set $\{1,\ldots,m-1\}\backslash\{j_1,\ldots,j_p\}$, and
$T(\mu)=j_1+\cdots+j_p$.

If $\lambda$ is a $(\delta_m+\epsilon)$-atypical dominant integral
weight with $\lambda_m=1$ and $\lambda_0=0$, we define
\begin{eqnarray}\label{3.6}\\\nonumber
   C_{\lambda}=C_{\lambda-\delta_m}+
  \frac{\prod\limits_{i=1}^m(1+x_iy)\prod\limits_{i=1}^m(1+x_iy^{-1})}{\prod\limits_{i=1}^m(1-x_i)\prod\limits_{1\leq i<j\leq m}(1-x_ix_j)}
  \sum_{\mu}(-1)^{T(\mu)}(S_{(-1,\mu)}-S_{(0,\mu)}),
\end{eqnarray}where the definitions of $\mu$ and $T(\mu)$ are as the
same as in \eqref{3.5}.

\begin{lemma}\label{T3.2}
$C_{\lambda}=F_{\lambda}$ if one sets $x_i=e^{-\delta_i}$ and
$y=e^{\epsilon}$.
\end{lemma}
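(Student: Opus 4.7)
The plan is to show both sides have the same factorization as (common prefactor) times a signed sum of generalized Schur functions times $\mathfrak{sl}_2$-characters, and then match them using the bijection of Remark 2.4. First I would compute $\mathrm{ch}\,M_\nu$ via PBW: as a $\mathfrak g_0$-module $M_\nu\cong U(\mathfrak u^-)\otimes L^{(0)}_\nu$, so
\[
\mathrm{ch}\,M_\nu=\frac{\prod_{\alpha\in\Delta_1^+}(1+e^{-\alpha})}{\prod_{1\le i\le j\le m}(1-e^{-(\delta_i+\delta_j)})}\,\mathrm{ch}\,L^{(0)}_\nu.
\]
After substituting $x_i=e^{-\delta_i}$ and $y=e^\epsilon$, the factor $\prod_i(1+x_i)$ in the numerator cancels against the $(1+x_i)$ coming from $(1-x_i^2)=(1-x_i)(1+x_i)$ in the denominator, leaving exactly the prefactor appearing in $C_\lambda$. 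This reduces Lemma 3.2 to a purely symmetric-function identity.

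Next I would use $\mathrm{ch}\,L^{(0)}_\nu=S_{\nu'}\cdot\sum_{s=0}^{2\nu_0}y^{\nu_0-s}$, extended to all integer sequences $\nu'$ via the determinantal formula $S_\nu=A_{\nu+\rho_{\mathfrak{gl}(m)}}/A_{\rho_{\mathfrak{gl}(m)}}$, which either vanishes (on collision of shifted entries) or equals $\pm$ a genuine Schur function with sign the sorting permutation. In case (i) the $\epsilon$-coordinate of $\tau_i\sigma(\lambda+\rho)-\rho+j\epsilon-j\delta_i$ is precisely $j$, so the $\mathfrak{sl}_2$-character $\sum_{s=0}^{2j}y^{j-s}$ and the sign $(-1)^j$ appear on both sides and cancel. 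The identity collapses to
\[
\sum_{\sigma\in\Gamma_{m-1}}\sum_{i=\flat(\sigma,\lambda)}^{m}(-1)^{\ell(\tau_i\sigma)}\,S_{\tau_i\sigma(\lambda+\rho)'-\rho'-j\delta_i}\;=\;\sum_{\mu}(-1)^{T(\mu)}S_{(j,\mu)},
\]
with $\mu$ as parameterised in \eqref{3.5}.

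The bijection $\sigma\mapsto\mu$ of Remark 2.4 is then applied on the left. Writing out $\sigma(\lambda+\rho)'$, one checks that the first $m-1$ entries reproduce the shifted partition $(j_p-1,\ldots,j_1-1\,|\,j_p-1,\ldots,j_1-1)$ plus the $\lambda$-contributions listed in \eqref{3.5}, and that $\ell(\sigma_\mu)\equiv T(\mu)\pmod 2$. The cyclic permutation $\tau_i$ moves the $m$-th coordinate (equal to $-1/2$) into slot $i$; combined with the shift by $-j\delta_i$ and the $\rho_{\mathfrak{gl}(m)}$-shift implicit in $S$, this inserts an entry of value $-j-1$ at position $i$. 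Thus the left-hand sequence for $S$ is $(\mu_1,\dots,\mu_{i-1},-j-1,\mu_i,\dots,\mu_{m-1})$, which sorts to $(-j-1,\mu_1,\dots,\mu_{m-1})=(j,\mu)$ (with the standard reinterpretation) at the cost of $(-1)^{i-1}$; I would verify that this sign combines with $\ell(\tau_i)=m-i$ and the $\sigma$-sign to reproduce $(-1)^{T(\mu)}$. Moreover the nonvanishing conditions for $S$ translate exactly into the bounds $j\ge\max\{0,\tfrac12-\sigma(\lambda+\rho)_{i-1}\}$ and $j\le -\tfrac32-\tau_i\sigma(\lambda+\rho)_{i+1}$ and into the lower bound $\flat(\sigma,\lambda)$ for $i$; this is where the funny-looking ranges in \eqref{3.3} come from.

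For case (ii) I would apply Lemma 2.10 (the typical-weight Weyl formula) to recognise the Weyl-type sum over $\Gamma_m$ as a sum of Vermas that must be matched against the difference $C_\lambda-C_{\lambda-\delta_m}$ in \eqref{3.6}; the required identity $\sum_{\sigma\in\Gamma_m}(-1)^{\ell(\sigma)}S_{\sigma(\lambda+\rho)'-\rho'}=\sum_\mu(-1)^{T(\mu)}(S_{(-1,\mu)}-S_{(0,\mu)})$ is obtained by the same Remark 2.4 substitution. The main obstacle is the bookkeeping in case (i): tracking $(-1)^{\ell(\tau_i\sigma)}$ through the Schur sorting sign and showing the resulting sign equals $(-1)^{T(\mu)}$ uniformly, while simultaneously checking that the combinatorial ranges for $i$ and $j$ correspond precisely to the nonvanishing of the generalised Schur functions $S_{(j,\mu)}$. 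Everything else is routine PBW and Weyl-character bookkeeping.
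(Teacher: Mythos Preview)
Your approach is essentially the paper's own: factor out the PBW prefactor, use the bijection between $\Gamma_{m-1}$ and self-conjugate partitions (this is the paper's Remark~2.2, not 2.4), match signed generalized Schur functions term by term, and read off the ranges on $i$ and $j$ from the nonvanishing conditions on $S$. The one place where your outline is hand-wavy and the paper is explicit is the step you call ``the standard reinterpretation'': the $F_\lambda$ side produces $\mathfrak{gl}(m)$-characters, i.e.\ Schur functions in $e^{\delta_i}=x_i^{-1}$, whereas the $C_\lambda$ side is written in $x_i$; the paper bridges this by passing to reversed coordinates $\lambda^c=\lambda_m\delta_1+\cdots+\lambda_1\delta_m$, $\rho^c$ and invoking the duality
\[
S_{(\nu_1,\ldots,\nu_m)}(x_1,\ldots,x_m)=S_{(-\nu_m,\ldots,-\nu_1)}(x_1^{-1},\ldots,x_m^{-1}),
\]
which turns $S_{\sigma(\lambda+\rho)'-\rho'-j\delta_m}(x^{-1})$ into $S_{\rho^c-\sigma(\lambda^c+\rho^c)+j\delta_1}(x)$ and thence into $(-1)^{T(\mu)-\ell(\sigma_\mu)}S_{(j,\mu)}(x)$. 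Once you make this substitution explicit (rather than trying to equate $(-j-1,\mu)$ with $(j,\mu)$ directly), the sign-and-range bookkeeping you describe goes through exactly as in the paper.
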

\begin{proof}
  For convenience, we denote
  $\lambda^c=\lambda_m\delta_1+\cdots+\lambda_1\delta_m$ and
  $\rho^c=-\frac{1}{2}\delta_1+\frac{1}{2}\delta_2+\cdots+(m-\frac{3}{2})\delta_m$. Then
  \begin{equation}\label{3.7}
  \Gamma_{m-1}=\{\sigma\in S_{m-1}\ltimes
\mathbb{Z}_2^{m-1}\mid
\sigma(\lambda^c+\rho^c)_2>\cdots>\sigma(\lambda^c+\rho^c)_{m}\}.
  \end{equation}
  Take
  \begin{equation}\label{3.7.1}
    \upsilon=(j_p-1,\ldots,j_1-1|j_p-1,\ldots,j_1-1)
  \end{equation}
  \begin{eqnarray}\label{3.8}\\\nonumber
\mu=\upsilon+(\lambda_{m-j_p},\ldots,\lambda_{m-j_1},-\lambda_{m-r_1},\ldots,-\lambda_{m-r_{m-1-p}}),
  \end{eqnarray} and
  \begin{equation}\label{3.9}
    \sigma_{\mu}=\omega_{\upsilon}\cdot\varepsilon_{\upsilon}
  \end{equation} for $\omega_{\upsilon}\in S_{m-1}$,
  $\varepsilon_{\upsilon}\in\mathbb{Z}_2^{m-1}$ and
  \begin{eqnarray}\label{3.10}
    \varepsilon_{\upsilon,s}=\left\{\begin{array}{ll}
      -1& \mbox{if}\ \  s\in\{m-j_1,\ldots,m-j_p\}\\
      1& \mbox{if}\ \  s\in\{m-r_1,\ldots,m-r_{m-1-p}\}
    \end{array}\right..
  \end{eqnarray} One easily shows that
  \begin{equation}\label{3.11}
    (-1)^{j+T(\mu)}S_{(j,\mu)}=(-1)^{j+l(\sigma_{\mu})}S_{\rho^c-\sigma_{\mu}(\lambda^c+\rho^c)+j\delta_1}.
  \end{equation} Since there is a 1-1 correspondence between the
  elements in $\Gamma_{m-1}$ and the
  strict increasing $p-$tuples $(j_1,\ldots,j_p)\in\{1,\ldots,m-1\}^p$ (see Remak \ref{r3.1}), where $p$ runs over the set
  $\{1,\ldots,m-1\}$, we get that
  \begin{eqnarray}\label{3.12}
    \sum_{\mu}(-1)^{j+T(\mu)}S_{(j,\mu)}=
    \sum_{\sigma\in\Gamma_{m-1}}(-1)^{j+l(\sigma)}S_{\rho^c-\sigma(\lambda^c+\rho^c)+j\delta_1}.
  \end{eqnarray} For an $m-$tuple $(\nu_1,\ldots,\nu_m)$, one has
  \begin{equation}\label{3.13}
    S_{(\nu_1,\ldots,\nu_m)}(x_1,\ldots,x_m)=
    S_{(-\nu_m,\ldots,-\nu_1)}(x_1^{-1},\ldots,x_m^{-1}).
  \end{equation} Thus
  \begin{equation}\label{3.14}
    S_{\rho^c-\sigma(\lambda^c+\rho^c)+j\delta_1}(x_1,\ldots,x_m)=
     S_{\sigma(\lambda+\rho)-\rho-j\delta_m}(x_1^{-1},\ldots,x_m^{-m}).
  \end{equation} Furthermore, it is well known that
  $S_{\eta}=(-1)^{l(w)}S_{\theta}$ if
  $\eta+\rho_{\mathfrak{gl}(m)}=w(\theta+\rho_{\mathfrak{gl}(m)})$, where $w\in S_m$. One
  checks that $S_{\sigma(\lambda+\rho)-\rho-j\delta_m}=0$ if
  $j\notin
  \{\max\{0,\frac{1}{2}-\sigma(\lambda+\rho)_{i-1}\},\ldots,-\frac{1}{2}-\tau_i\sigma(\lambda+\rho)_{i+1}\}$
  for $i\in\{\flat(\sigma,\lambda),\ldots,m\}$ in \eqref{3.3}. These
  complete the proof.
\end{proof}

\begin{lemma}\label{1} The following identity holds:
  \begin{equation}\label{2}
    \frac{\prod\limits_{i=1}^m(1-x_i)\prod\limits_{1\leq i<j\leq m}(1-x_ix_j)}{\prod\limits_{i=1}^m(1+x_iy)\prod\limits_{i=1}^m(1+x_iy^{-1})}=
    \sum_{j=0}^{\infty}\sum_{\mu}(-1)^{j+{\frac{|\mu|+p(\mu)}{2}}}S_{(j,\mu)}\sum_{s=0}^{2j}y^{j-s},
  \end{equation} where $\mu$ runs over all self-conjugate partitions $\mu=(\alpha_1,\ldots,\alpha_p|\alpha_1,\ldots,\alpha_p)$ with $\alpha_1\leq
  m-2$. Here $p(\mu)=p$ and $S_{(j,\mu)}=S_{(j,\mu)}(x_1,\ldots,x_m)$.
\end{lemma}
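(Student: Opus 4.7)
My plan is to establish the identity by reformulating it as an inverse character formula for the generalized Verma module $M_0$, and then verifying that formula by expanding both sides in a common symmetric-function basis.

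First, under the substitution $x_i = e^{-\delta_i}$, $y = e^{\epsilon}$, a direct PBW computation on $M_0 = U(\mathfrak{u}^-)$ (using the description of $\mathfrak{g}_{-1}$ and $\mathfrak{g}_{-2}$ from Section~2.2) yields
\[
\mbox{ch } M_0 \;=\; \frac{\prod_{i=1}^m(1+x_iy)(1+x_iy^{-1})}{\prod_{i=1}^m(1-x_i)\prod_{1\le i<j\le m}(1-x_ix_j)},
\]
so the left-hand side of Lemma~\ref{1} is $(\mbox{ch } M_0)^{-1}$. Multiplying the desired identity through by $\mbox{ch } M_0$ converts it into the assertion that the trivial character $\mbox{ch } L_0 = 1$ expands as a signed sum of characters of generalized Verma modules $\mbox{ch } M_{\nu(j,\mu)}$. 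This is precisely the ``nontrivial combinatorial identity'' advertised in the introduction, and may be regarded as a tail-atypical analogue of the typical Kac character formula (Lemma~\ref{T2.8.2}) for $\lambda = 0$.

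The strategy for the expansion proceeds in two stages. First, apply the Cauchy identity to the two-variable factor: with $z_1 = -y$ and $z_2 = -y^{-1}$,
\[
\prod_{i=1}^m \frac{1}{(1+x_iy)(1+x_iy^{-1})} \;=\; \sum_{\lambda:\,\ell(\lambda)\le 2} (-1)^{|\lambda|} s_\lambda(\mathbf{x}) \, s_\lambda(y, y^{-1}),
\]
so that only Schur functions $s_{(a,b)}(\mathbf{x})$ with at most two parts intervene, together with the explicit two-variable formula $s_{(a,b)}(y,y^{-1}) = (y^{a-b+1}-y^{-(a-b+1)})/(y-y^{-1})$. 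Second, multiply by the polynomial $\prod_i(1-x_i)\prod_{i<j}(1-x_ix_j)$, expand this via its Jacobi--Trudi determinant, and apply the Pieri rule repeatedly. The resulting double sum of Schur functions in $\mathbf{x}$ can then be reorganised through the Frobenius parametrisation $\mu = (\alpha_1, \ldots, \alpha_p \mid \alpha_1, \ldots, \alpha_p)$ with $\alpha_1 \le m-2$: each such $\mu$ corresponds via Remark~\ref{r3.1} to an element $\sigma_\mu \in \Gamma_{m-1}$ with length $\ell(\sigma_\mu) = T(\mu) = (|\mu|+p(\mu))/2$, and under this correspondence the individual Schur contributions assemble into the book-shaped terms $S_{(j,\mu)}$ with the prescribed sign $(-1)^{j+T(\mu)}$ and with $y$-dependence packaged exactly as the geometric sum $\sum_{s=0}^{2j} y^{j-s}$.

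The main technical obstacle is the sign and straightening bookkeeping. One must show that the signs produced by (i) the Cauchy factor $(-1)^{|\lambda|}$, (ii) the Jacobi--Trudi determinants for $\prod(1-x_i)\prod(1-x_ix_j)$, and (iii) the straightening of non-partition-shaped Schur determinants $S_{(j,\mu)}$ (which occur whenever $j < \mu_1$) combine to give exactly $(-1)^{j+T(\mu)}$. The key input is the identification $T(\mu) = \ell(\sigma_\mu)$ coming from Remark~\ref{r3.1}, which allows the signs to be reinterpreted as the length function on the subgroup $\Gamma_{m-1}$ of the Weyl group; the collapse of the three sign contributions then follows from standard manipulations with signed sums over Weyl group elements. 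Convergence is not an issue, since we work in the ring of formal power series in $x_1, \ldots, x_m$ with Laurent-polynomial coefficients in $y$, where only finitely many pairs $(j, \mu)$ contribute to any fixed total $\mathbf{x}$-degree.
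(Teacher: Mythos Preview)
Your proposal is a strategic outline rather than a proof, and the part you label the ``main technical obstacle'' is in fact the entire content of the lemma. After the Cauchy step you have
\[
\Bigl(\sum_{\nu\ \text{self-conj.}} (-1)^{(|\nu|+p(\nu))/2} S_\nu(\mathbf x)\Bigr)\cdot
\Bigl(\sum_{a\ge b\ge 0} (-1)^{a+b} s_{(a,b)}(\mathbf x)\,s_{(a,b)}(y,y^{-1})\Bigr),
\]
and you assert that this ``can be reorganised'' into $\sum_{j,\mu}(-1)^{j+T(\mu)}S_{(j,\mu)}\sum_{s=0}^{2j}y^{j-s}$ by ``standard manipulations with signed sums over Weyl group elements''. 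But nothing standard does this. The $y$-dependence produced by Cauchy is $s_{(a,b)}(y,y^{-1})=y^{a-b}+y^{a-b-2}+\cdots+y^{-(a-b)}$, i.e.\ exponents of a fixed parity, whereas the target has $y^j+y^{j-1}+\cdots+y^{-j}$, exponents of both parities; collapsing the parity requires a genuinely nontrivial cancellation among the $s_{(a,b)}(\mathbf x)\cdot S_\nu(\mathbf x)$ terms that you never perform. You also do not explain why the self-conjugate $\nu$ with $\alpha_1\le m-1$ coming from the Littlewood identity for $\prod(1-x_i)\prod(1-x_ix_j)$ should reassemble into the $\mu$ with $\alpha_1\le m-2$ required on the right-hand side. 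The identification $T(\mu)=\ell(\sigma_\mu)$ from Remark~\ref{r3.1} is only a relabelling of signs; it does not by itself produce any cancellation or reorganisation of Schur functions.

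The paper's proof is organised quite differently and avoids all of this. It multiplies both sides by the denominator $\prod(1+x_iy)\prod(1+x_iy^{-1})$, expands that factor explicitly in elementary symmetric polynomials (equation~\eqref{3}), writes the right-hand side as $\sum_k B_k y^k$ with $B_k=\sum_{j\ge |k|}(-1)^jA_j$ and $A_j=\sum_\mu(-1)^{(|\mu|+p(\mu))/2}S_{(j,\mu)}$, and then compares coefficients of $y^l$. Using the Newton relation $\sum_r(-1)^r e_r h_{k-r}=0$ this reduces the whole lemma to the finite list of identities $A_j=A_{-(j+1)}$ for $0\le j\le m-2$ together with one boundary identity, and these are proved by an explicit bijection on Frobenius coordinates (equations~\eqref{18}--\eqref{21}). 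That bijection is the actual combinatorial heart of the argument, and your sketch contains no analogue of it.
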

\begin{proof}
  One shows that
  \begin{equation}\label{3}
    \prod\limits_{i=1}^m(1+x_iy)\prod\limits_{i=1}^m(1+x_iy^{-1})=\sum_{k=0}^m(\sum_{j=0}^ke_je_{m-k+j})y^{m-k}
    +\sum_{k=1}^m(\sum_{j=0}^{m-k}e_{k+j}e_j)y^{-k},
  \end{equation} where $e_j=e_j(x_1,\ldots,x_m)$.

  We denote
  \begin{equation}\label{4}
    A_j=\sum_{\stackrel{\mu=(\alpha_1,\ldots,\alpha_p|\alpha_1,\ldots,\alpha_p)}{0\leq\alpha_1\leq m-2}}
    (-1)^{\frac{|\mu|+p(\mu)}{2}}S_{(j,\mu)}
  \end{equation} On the other hand, we can write
  $A_j=\sum_{s=0}^{m-1}a_sh_{j+s}$ by the definition of $S_{\lambda}$, where $a_s$ are certain
  polynomials in variables $x_1, \ldots, x_m$, and $h_j=h_j(x_1,\ldots,
  x_m)$. We write
  \begin{equation}\label{5}
    B_k=\left\{
    \begin{array}{ll}
      \sum_{j=k}^{\infty}(-1)^jA_j\ \ & \ \ if \ \ k\geq0,\\
      \sum_{j=-k}^{\infty}(-1)^jA_j\ \ & \ \ if \ \ k<0.
    \end{array}\right.
  \end{equation} Thus $B_k=B_{-k}.$ Moreover
  \begin{eqnarray}\label{6}
    \sum_{j=0}^{\infty}\sum_{\mu}(-1)^j(-1)^{\frac{|\mu|+p(\mu)}{2}}S_{(j,\mu)}\sum_{s=0}^{2j}y^{j-s}
    =\sum_{k=-\infty}^{\infty}B_ky^k.
  \end{eqnarray}
Hence
\begin{eqnarray}\label{7}
  &&\prod\limits_{i=1}^m(1+x_iy)\prod\limits_{i=1}^m(1+x_iy^{-1})(\sum_{k=-\infty}^{\infty}B_ky^k)\nonumber\\
  &=&\sum_{l=-\infty}^{\infty}(\sum_{k=0}^m\sum_{j=0}^ke_je_{m-k+j}B_{l-m+k}+\sum_{k=1}^m\sum_{j=0}^{m-k}e_{k+j}e_jB_{l+k})y^l.
\end{eqnarray}

Now we consider the coefficients of $y^l$. Since the coefficient of
$y^l$ is the same as of $y^{-l}$, we only need to check the case
$l\geq 0$. In fact, we want to show these coefficients are 0 if
$l>0$, and is $\prod_{i=1}^m(1-x_i)\prod_{1\leq i<j\leq
m}^m(1-x_ix_j)$ if $l=0$.

Note that
\begin{eqnarray}\label{8}\\\nonumber
  \sum_{k=0}^m\sum_{j=0}^ke_je_{m-k+j}B_{l-m+k}+\sum_{k=1}^m\sum_{j=0}^{m-k}e_{k+j}e_jB_{l+k}=
  \sum_{j=0}^me_j\sum_{k=0}^me_{m-k}B_{l-(m-j)+k},
\end{eqnarray}
\begin{equation}\label{9}
B_p=\sum_{j=p}^{\infty}(-1)^jA_j=\sum_{j=p}^{\infty}\sum_{s=0}^{m-1}(-1)^ja_sh_{j+s}
\end{equation} for $p\geq0$, and
\begin{equation}\label{10}
  h_k-e_1h_{k-1}+e_2h_{k-2}+\cdots+(-1)^me_mh_{h-m}=0
\end{equation} for $k\neq0$. Thus the coefficient of $y^l$ is 0 if $l\geq
m$.

Let $l=m-1$, It is sufficiently to show that
\begin{equation}\label{11}
  e_mB_{-1}+e_{m-1}B_0+\cdots+e_1B_{m-2}+B_{m-1}=0.
\end{equation}
Since
\begin{equation}\label{12}
   e_mB_{0}+e_{m-1}B_1+\cdots+e_1B_{m-1}+B_{m}=0
\end{equation} by\eqref{8}-\eqref{10}, then \eqref{11} is equal to
\begin{equation}\label{13}
  A_0=A_{-1}.
\end{equation} By induction, one shows that the equality\eqref{2} is equal
to
\begin{equation}\label{14}
  \left\{\begin{array}{ll}
    A_j=A_{-(j+1)} & \mbox{for}\ j=0,1,\ldots,m-2,\\
    (-1)^mA_{m-1}+a_0=\prod\limits_{i=1}^m(1-x_i)\prod\limits_{1\leq i<j\leq
    m}(1-x_ix_j).
  \end{array}\right.
\end{equation}Note that\cite{M}
\begin{equation}\label{15}
  \prod_{i=1}^m(1-x_i)\prod_{1\leq i<j\leq
    m}(1-x_ix_j)=\sum_{\mu}(-1)^{\frac{|\mu|+p(\mu)}{2}}S_{\mu},
\end{equation} where
$\mu=(\alpha_1,\ldots,\alpha_p|\alpha_1,\ldots,\alpha_p)$ and
$\alpha_1\leq m-1.$ But
\begin{equation}\label{16}
  a_0=\sum_{\mu}(-1)^{\frac{|\mu|+p(\mu)}{2}}S_{\mu}
\end{equation} where
$\mu=(\alpha_1,\ldots,\alpha_p|\alpha_1,\ldots,\alpha_p)$ and
$\alpha_1\leq m-2$. Then
\begin{eqnarray}\label{17}
&&\prod_{i=1}^m(1-x_i)\prod_{1\leq i<j\leq
    m}(1-x_ix_j)-a_0\\\nonumber
&=&e_m\sum_{\mu=(\alpha_1,\ldots,\alpha_p|\alpha_1,\ldots,\alpha_p)}
(-1)^{\frac{|\mu|+p(\mu)+2m}{2}}S_{(m-1,\mu)}\\\nonumber
&=&(-1)^me_mA_{m-1}.
\end{eqnarray} Now the statement $\eqref{14}_2$ is hold.

Observe that
\begin{eqnarray}\label{18}
  &&(\alpha_1,\ldots,\alpha_p|\alpha_1,\ldots,\alpha_p)\\\nonumber
&=&(\alpha_1+1,\ldots,\alpha_p+p,\overbrace{p,\ldots,p}^{\alpha_p}
,\overbrace{p-1,\ldots,p-1}^{\alpha_{p-1}-\alpha_p-1},\ldots,\overbrace{1,\ldots,1}^{\alpha_1-\alpha_2-1},\overbrace{0,\ldots,0}^{m-\alpha_1-2})
\end{eqnarray}
Thus the first column of the corresponding matrix for
$S_{(j,(\alpha_1,\ldots,\alpha_p|\alpha_1,\ldots,\alpha_p))}$ is
\begin{eqnarray}\label{19}\\\nonumber
  (h_j,h_{\alpha_1},\ldots,h_{\alpha_p},\overbrace{h_{-1},\ldots,h_{-\alpha_p}}^{\alpha_p},
  \overbrace{h_{-\alpha_p-2},\ldots,h_{-\alpha_{p-1}},}^{\alpha_{p-1}-\alpha_p-1}\ldots,\\\nonumber
  \overbrace{h_{-\alpha_2-2},\ldots,h_{-\alpha_1},}^{\alpha_1-\alpha_2-1}
  \overbrace{h_{-\alpha_1-2},\ldots,h_{-m+1}}^{m-\alpha_1-2}).
\end{eqnarray}
It is clearly that
$S_{(j,(\alpha_1,\ldots,\alpha_p|\alpha_1,\ldots,\alpha_p))}\neq0$
if and only if $\alpha_s\in\{0,\ldots,m-2\}\backslash\{j\}$ for $0\leq j\leq
m-2$, where $s\in\{1,\ldots,p\}$. In this case, there exists $s$
s.t. $\alpha_{s+1}<j$ and $\alpha_s>j$ for $s\in\{1,\ldots,p\}$.
Hence
\begin{equation}\label{20}
\alpha_{s+1}+2\leq j+1,\ \ \ \ \alpha_s\geq j+1.
\end{equation} Since
$(|(\alpha_1,\ldots,\alpha_p\mid\alpha_1,\ldots,\alpha_p)|+p)/2=p+\alpha_1+\cdots+\alpha_p$,
thus
\begin{eqnarray}\label{21}
&&(-1)^{p+\alpha_1+\cdots+\alpha_p}S_{(j,(\alpha_1,\ldots,\alpha_p|\alpha_1,\ldots,\alpha_p))}\\\nonumber&=&
(-1)^{p+\alpha_1+\cdots+\alpha_p+j+1}
S_{(-(j+1),(\alpha_1,\ldots,\alpha_s,j,\alpha_{s+1},\ldots,\alpha_p|\alpha_1,\ldots,\alpha_s,j,\alpha_{s+1},\ldots,\alpha_p))}.
\end{eqnarray}
We see that
$S_{(-(j+1),(\alpha_1,\ldots,\alpha_p|\alpha_1,\ldots,\alpha_p))}\neq0$
if and only if there exist $s\in\{\alpha_1,\ldots,\alpha_p\}$ s.t. $\alpha_s=j$
by \eqref{20}, which implies that $A_j=A_{-(j+1)}$ for
$j=0,1,\ldots,m-2$. These complete the proof.
\end{proof}

In fact, Lemma \ref{1} gives the character of the trivial
module $L_0$ in form of an infinite sum of characters of generalized Verma modules.

\begin{lemma}\label{27} For $\lambda\in\mathcal{P}$ and
$\lambda_m=\lambda_0=0$, we have
  \begin{equation}\label{28}
  C_{\lambda}\cdot C_{\delta_1}=\sum_{\mu\in\mathcal{P}_{\lambda}}C_{\mu}.
  \end{equation}
\end{lemma}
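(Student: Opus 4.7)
The plan is to reduce the identity to a purely combinatorial statement about Schur functions by exploiting the common ``prefactor'' in \eqref{3.5}, and then to verify it by Pieri-rule computations informed by Lemma \ref{1}.

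First, I would observe that every $C_\nu$ in sight has the form $C_\nu = P \cdot \Xi_\nu$, where
$$P \;=\; \frac{\prod_{i=1}^m (1+x_iy)\prod_{i=1}^m(1+x_iy^{-1})}{\prod_{i=1}^m(1-x_i)\prod_{1\leq i<j\leq m}(1-x_ix_j)}$$
is independent of $\nu$ and $\Xi_\nu$ is the alternating Schur-sum in \eqref{3.5}. Canceling a single factor of $P$ from both sides reduces the claim to
$$P \cdot \Xi_\lambda \cdot \Xi_{\delta_1} \;=\; \sum_{\mu \in \mathcal{P}_\lambda} \Xi_\mu.$$
Next, I would compute $\Xi_{\delta_1}$ explicitly: since $\delta_1 = (1,0,\ldots,0;0)$ has nearly all coordinates zero, the shift vector in \eqref{3.5} collapses and the sum reduces to a small list of Schur terms. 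Combined with Lemma \ref{1}, I expect $P \cdot \Xi_{\delta_1}$ to evaluate to the $\mathfrak{g}_0$-character of the natural module given in \eqref{3.1}, namely $\mathrm{ch}\,L^{(0)}_{\delta_1} + \mathrm{ch}\,L^{(0)}_{-\delta_m} + \mathrm{ch}\,L^{(0)}_\epsilon$, up to the formal rewriting needed to absorb shifts in $y$. This identifies $P \cdot \Xi_{\delta_1}$ with the three-summand object appearing in \eqref{3.7}.

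Second, I would expand the left-hand side term-by-term using the Pieri rule. Each Schur function $S_{(j,\nu)}$ appearing in $\Xi_\lambda$ gets multiplied by the three $\mathfrak{g}_0$-summands of the natural module, producing Schur functions $S_{(j,\nu')}$ where $\nu'$ differs from $\nu$ by adding or removing a single box, plus $y$-shifted contributions for the $\epsilon$ direction. Grouping contributions according to which $\mu \in \mathcal{P}_\lambda^{(0)}$ the new partition corresponds to, and using the $\mathfrak{g}_0$-level tensor decomposition \eqref{3.7} term-by-term, the LHS reorganizes into $\sum_{\mu \in \mathcal{P}_\lambda^{(0)}} \Xi_\mu$ (with $\Xi_\mu$ extended by the natural combinatorial formula to all relevant weights). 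The final step is to verify that the contributions from $\mu \in \mathcal{P}_\lambda^{(0)} \setminus \mathcal{P}_\lambda$ vanish identically, using that such $\mu$ violates the dominance conditions of Theorem 2.1, which forces the relevant Schur determinants $S_{(j, \cdot)}$ to have repeated columns and hence to be zero.

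The main obstacle will be the sign and $y$-shift bookkeeping, especially the interplay between the outer summation $\sum_{s=0}^{2j} y^{j-s}$ in \eqref{3.5} and the $y$-contributions coming from the odd roots $\delta_i \pm \epsilon$ inside $P$. A secondary subtle point is the boundary index $i = m$, where the tail atypicality hypothesis $\lambda_m = \lambda_0 = 0$ is crucial: here one must argue carefully that only the correct set of partitions $\mu$ contributes, using the upper bound $\alpha_1 \leq m-2$ on the self-conjugate partitions in Lemma \ref{1} (which reflects the ``gap'' enforced by dominance at the tail). I expect these to be technical rather than conceptual difficulties, amounting to a careful case analysis along the lines of the proof of Lemma \ref{1}.
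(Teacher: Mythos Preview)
Your overall plan --- strip the prefactor, replace $C_{\delta_1}$ by the explicit natural-module character $T = S_{(1)} + S_{(0,\ldots,0,-1)} + y + 1 + y^{-1}$, and expand by Pieri --- is exactly what the paper does. But two steps of your proposal would not go through as stated.

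First, the order of logic. You want to establish $C_{\delta_1} = T$ as a preliminary, ``combining with Lemma~\ref{1}.'' But Lemma~\ref{1} only gives $C_0 = 1$; passing from this to $C_{\delta_1} = T$ is \emph{exactly} the $\lambda = 0$ instance of the identity you are trying to prove. The paper avoids this circularity by proving the combinatorial identity $C_\lambda \cdot T = \sum_{\mu \in \mathcal{P}_\lambda} C_\mu$ directly (where $T$ is defined explicitly, not as $C_{\delta_1}$), and only \emph{afterwards} specializing to $\lambda = 0$ to conclude $T = C_{\delta_1}$.

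Second, and more substantively, your cancellation mechanism for $\mu \in \mathcal{P}_\lambda^{(0)} \setminus \mathcal{P}_\lambda$ is incomplete. You claim such terms vanish because the Schur determinants acquire repeated columns. That accounts for only part of the cancellation. After converting to the $\Gamma_{m-1}$-indexing via \eqref{3.11}, suppose $\lambda + \delta_{m-l}$ is non-dominant, i.e.\ $\lambda_{m-l} = \lambda_{m-l-1}$. When the indices $l$ and $l+1$ lie in the same set (both among the $j$'s or both among the $r$'s in the parametrization of $\sigma$), the corresponding $S_{(j,\cdot)}$ does vanish by repetition. But when $l$ and $l+1$ lie in \emph{different} sets, the Schur function does not vanish; instead the paper pairs $\sigma$ with a companion $\sigma' = w\sigma \in \Gamma_{m-1}$ (see \eqref{34}--\eqref{3.17}) of opposite sign, so that the two contributions cancel. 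This sign-reversing involution on $\Gamma_{m-1}$ is the missing idea in your sketch. A separate boundary argument is also needed when $\lambda_{m-1} = 0$ (see \eqref{3.18}), and you have not addressed that $\mu = \lambda + \delta_m$ (when it lies in $\mathcal{P}_\lambda$) is \emph{non-tail} atypical, so its $C_\mu$ is given by \eqref{3.6} rather than \eqref{3.5}; the extra terms $S_{(-1,\mu)} - S_{(0,\mu)}$ appearing in \eqref{31} are precisely what accounts for this.
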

\begin{proof}
  Denote
  $T=S_{(1)}+S_{(0,\ldots,0,-1)}+y+1+y^{-1}$. We want to show that
\begin{equation}\label{29}
  C_{\lambda}\cdot T=\sum_{\mu\in\mathcal{P}_{\lambda}}C_{\mu}.
  \end{equation} Since $C_0=1$ by Lemma \ref{1}, we get that
  $T=C_{\delta_1}$.

  For convenience, we will write
  \begin{equation}\label{30}
    A=\frac{\prod\limits_{i=1}^m(1+x_iy)\prod\limits_{i=1}^m(1+x_iy^{-1})}{\prod\limits_{i=1}^m(1-x_i)\prod\limits_{1\leq i<j\leq
    m}(1-x_ix_j)}.
  \end{equation} Thus
  \begin{eqnarray}\label{31}\\\nonumber
    &&C_{\lambda}\cdot
    T\\\nonumber&=&A\sum_{j=0}^{\infty}\sum_{\mu}\sum_{l=2}^m(-1)^{j+T(\mu)}S_{(j,\mu)\pm\delta_l}\sum_{s=0}^{2j}y^{j-s}
    +A\sum_{j=0}^{\infty}\sum_{\mu}(-1)^{j+T(\mu)}S_{(j,\mu)}\sum_{s=0}^{2j}y^{j-s}\\\nonumber
    &+&A\sum_{\mu}(-1)^{T(\mu)}S_{(-1,\mu)}-A\sum_{\mu}(-1)^{T(\mu)}S_{(0,\mu)}.
  \end{eqnarray}

One can easily show that $\sum_{j=1}^mS_{\lambda\pm\delta_j}=0$ if
$S_{\lambda}=0$. Moreover,
$\sum_{j=1}^mS_{\lambda\pm\delta_j}=\sum_{j=1}^mS_{\lambda\pm\sigma(\delta_j)}$
for any $\sigma\in S_m\ltimes\mathbb{Z}_2^m$.

Using \eqref{3.11}, we get that
\begin{eqnarray}\label{3.15}
&&C_{\lambda}\cdot
    T\\\nonumber&=&A\sum_{j=0}^{\infty}\sum_{\sigma\in(S_{m-1}\ltimes\mathbb{Z}_2^{m-1})^{\lambda}}
    \sum_{l=2}^m(-1)^{j+l(\sigma)}S_{\rho^c-\sigma(\lambda^c+\rho^c\pm\delta_l)+j\delta_1}\sum_{s=0}^{2j}y^{j-s}\\\nonumber
    &+&A\sum_{j=0}^{\infty}\sum_{\sigma\in(S_{m-1}\ltimes\mathbb{Z}_2^{m-1})^{\lambda}}
    (-1)^{j+l(\sigma)}S_{\rho^c-\sigma(\lambda^c+\rho^c)+j\delta_1}\sum_{s=0}^{2j}y^{j-s}\\\nonumber
    &+&A\sum_{\sigma\in(S_{m-1}\ltimes\mathbb{Z}_2^{m-1})^{\lambda}}(-1)^{l(\sigma)}
    (S_{\rho^c-\sigma(\lambda^c+\rho^c)-\delta_1}-S_{\rho^c-\sigma(\lambda^c+\rho^c)}).
\end{eqnarray}

 We say that
$\lambda^c\in\mathcal {P}_{\mathfrak{gl}(m)}^c$ if
$\lambda\in{\mathcal {P}}_{\mathfrak{gl}(m)}$ (note that
$\lambda_0=0$).

If $\lambda^c+\delta_{l+1}\notin\mathcal {P}^c_{\mathfrak{gl}(m)}$,
i.e. $\lambda+\delta_{m-l}\notin\mathcal {P}_{\mathfrak{gl}(m)}$ for
$l=1,\ldots,m-2$, then $\lambda_{m-l}=\lambda_{m-(l+1)}$.

Let $l\in\{j_1,\ldots,j_p\}$, for example, $l=j_{p-t}$. Now if
$l+1\in\{j_1,\ldots,j_p\}$, then $l+1=j_{p-t+1}$. Thus
$j_{p-t}+t+\lambda_{m-j_{p-t}}=j_{p-t+1}+t-1+\lambda_{m-j_{p-t+1}}$.
It implies that
$S_{\sigma(\lambda+\rho+\delta_{m-l})-\rho-j\delta_m}=0$. If
$l+1\in\{r_1,\ldots,r_{m-1-p}\}$, we set $j'_k=j_k$ if $k\neq p-t$,
and $j'_{p-t}=l+1$. Thus there exists $s\in\{1,\ldots,m-1-p\}$ s.t.
$r_s=l+1$. We set $r'_q=r_q$ if $q\neq s$, and $r'_s=l$. We denote
\begin{equation}\label{34}
w=(m-p-s,m-p+t)(0,1,\ldots,1,\overset{m-p-s}{-1},1,\ldots,1,\overset{m-p+t}{-1},1,\ldots,1).
\end{equation} Then
$\sigma'=w\sigma$(note that
$\sigma'\in\Gamma_{m-1}$), and
\begin{equation}\label{3.16}
  \sigma(\lambda+\rho+\delta_{m-l})-\rho-j\delta_m=\sigma'(\lambda+\rho+\delta_{m-l})-\rho-j\delta_m.
\end{equation}
But $(-1)^{l(\sigma')}=-(-1)^{l(\sigma)}$, thus we know that
\begin{equation}\label{3.17}
  (-1)^{j+l(\sigma)}S_{\rho^c-\sigma(\lambda^c+\rho^c+\delta_{l+1})+j\delta_1}
+(-1)^{j+l(\sigma')}S_{\rho^c-\sigma'(\lambda^c+\rho^c+\delta_{l+1})+j\delta_1}=0
\end{equation}
in \eqref{3.15}. The case $l\in\{r_1,\ldots,r_{m-1-p}\}$ is similar.

If $\lambda^c-\delta_{l+2}\notin\mathcal {P}^c_{\mathfrak{gl}(m)}$,
i.e. $\lambda-\delta_{m-l-1}\notin\mathcal {P}_{\mathfrak{gl}(m)}$
for $l=1,\ldots,m-2,$ the discussion is similar to the above.

If $\lambda_{m-1}=0$, then $\lambda-\delta_{m-1}\notin\mathcal
{P}_{\mathfrak{gl}(m)}$, i.e. $\lambda^c-\delta_2\notin\mathcal
{P}^c_{\mathfrak{gl}(m)}$. Note that
$\sigma(\lambda+\rho-\delta_{m-1})=\sigma(1,,\ldots,1,-1,1)(\lambda+\rho)$,
and if $\sigma\in\Gamma_{m-1}$, then $\sigma'\in\Gamma_{m-1}$ by
$\lambda_{m-1}=0$ and $\rho_{m-1}=\frac{1}{2}$. These imply that
\begin{eqnarray}\label{3.18}\\\nonumber
\sum\limits_{\sigma\in\Gamma_{m-1}}(-1)^{j+l(\sigma)}S_{\rho^c-\sigma(\lambda^c+\rho^c-\delta_2)+j\delta_1}
+\sum\limits_{\sigma\in\Gamma_{m-1}}(-1)^{j+l(\sigma)}S_{\rho-\sigma(\lambda^c+\rho^c)+j\delta_1}=0.
\end{eqnarray}

One shows
that$\sum_{\mu}(-1)^{j_1+\cdots+j_p}S_{(-1,\mu)}-\sum_{\mu}(-1)^{j_1+\cdots+j_p}S_{(0,\mu)}=0$
if $\lambda_{m-1}=0$, via $S_{(0,\mu)}\neq 0$ iff $j_1\neq1$ and
$S_{(-1,\mu)}\neq 0$ iff $j_1=1$. Thus \eqref{28} hold.
\end{proof}

\vspace{0.5 cm}

\noindent\textbf{Proof of Theorem \ref{T3.1}:} We will prove the
case $\lambda_0=0$ in this theorem by induction on the height of
weight $\lambda$. Firstly, if $ht \lambda=0$ (i.e. $\lambda=0$), one
easily shows that the statement holds by Lemma \ref{T3.2} and Lemma
\ref{1}. Moreover, the statement holds for $\lambda=\delta_1$ by
Lemma \ref{T3.2}. Now by induction,
\begin{eqnarray}\label{3.19}
  L_{\lambda}\otimes L_{\delta_1}=\sum_{\mu\in\mathcal{P}_{\lambda}}C_{\mu}\quad\mbox{if
  $\lambda_m=0$}.
\end{eqnarray} In this case, $\lambda$ is a tail
atypical integral dominant weight. Since the weights in $\mathcal
{P}_{\lambda}$ are in different blocks and
$\tau_i\sigma(\lambda+\rho)-\rho-j\delta_i<\lambda$, then we have
$L_{\mu}=C_{\mu}$ if $\mu\in\mathcal{P}_{\lambda}$ and if $\mu$ is
also a tail atypical integral dominant weight by Remark 2.5. Now if
$\lambda_{m-1}>0$, we consider $L_{\lambda+\delta_m}$. By
\eqref{3.19} and above discussion, we can assume that
\begin{eqnarray}\label{3.20}
  L_{\lambda+\delta_m}+xL_{\lambda}=L_{\lambda}+\sum_{\sigma\in\Gamma_m}(-1)^{\ell(\sigma)}
M_{\sigma(\lambda+\delta_m+\rho)-\rho},
\end{eqnarray}where $x$ is a nonnegative integer. One compares the coefficients of $e^{\lambda}$ of the
two sides of \eqref{3.20} then gets that $x<2$. But if $x=1$, one
has
$L_{\lambda+\delta_m}=\sum\limits_{\sigma\in\Gamma_m}(-1)^{\ell(\sigma)}
M_{\sigma(\lambda+\delta_m+\rho)-\rho}$. It contradicts to Lemma
\ref{T2.8.2}. These prove the statement of the case $\lambda_0=0$.

Now we show $ii)$ of the theorem with the case $\lambda_0>0$ (i.e. $k<m$ in \eqref{3.4}). Denote $\lambda=\sum_{j=1}^{k-1}\lambda_j\delta_j+\sum_{j=k}^m\delta_j+(m-k)\epsilon$ and  $\lambda'=\sum_{j=1}^{k-1}\lambda_j\delta_j+\sum_{j=k}^m\delta_j+(m-k+1)\epsilon$  . Then we claim that
\begin{equation}\label{3.21}
  F_{\lambda'}\cdot L_{\delta_1}=\sum_{\mu\in\mathcal{P}_{\lambda'}\backslash\{\lambda'\}}F_{\mu}.
\end{equation} This claim can be show by induction on $k$. Note that
\begin{eqnarray}\label{3.22}
  \mbox{Left}=\sum_{\nu\in\mathcal{P}_{\lambda^T}}
  L_{\nu}+\sum_{\sigma\in\Gamma_m}\sum_{\xi\in\mathcal{P}^{(0)}_{\sigma(\lambda'+\rho)-\rho}}(-1)^{l(\sigma)}M_{\xi},
\end{eqnarray}
\begin{eqnarray}\label{3.23}
  \mbox{Right}=\sum_{\nu\in\mathcal{P}_{\lambda^T}}
  L_{\nu}+\sum_{\sigma\in\Gamma_m}\sum_{\eta\in\mathcal{P}_{\sigma(\lambda'+\rho)-\rho}\backslash\{\sigma(\lambda'+\rho)-\rho\}}(-1)^{l(\sigma)}M_{\eta}.
\end{eqnarray} The elements in $\mathcal{P}^{(0)}_{\sigma(\lambda'+\rho)-\rho}$ but not in $\mathcal{P}_{\sigma(\lambda'+\rho)-\rho}\backslash\{\sigma(\lambda'+\rho)-\rho\}$ are
$\nu=\sigma(\sum_{j=1}^{k-1}\lambda_j\delta_j+\sum_{j=k}^{m-1}\delta_j+(m-k+1)\epsilon+\rho)-\rho$ and $\zeta=\sigma(\sum_{j=1}^{k-1}\lambda_j\delta_j+\sum_{j=k}^m\delta_j+(m-k+1)\epsilon+\rho)-\rho$. Since $\sigma\cdot(1,\ldots,1,-1)(\nu)=\zeta$ and
$\sigma\cdot(1,\ldots,1,-1)\in\Gamma_m$ if $\sigma\in\Gamma_m$. This implies that $\sum_{\sigma\in\Gamma_m}(-1)^{l(\sigma)}M_{\nu}+\sum_{\sigma\in\Gamma_m}(-1)^{l(\sigma)}M_{\zeta}=0$. Thus the claim is hold.

Now we show $L_{\lambda}=F_{\lambda}$ by induction on $m-k$. By \eqref{3.21}, one has
\begin{equation}\label{3.24}
L_{\lambda}=(1-x)L_{\lambda^T}+\sum_{\sigma\in\Gamma_m}(-1)^{l(\sigma)}M_{\sigma(\lambda+\rho)-\rho},
\end{equation} where $x\geq0$.

If $1<k<m$, the coefficient of $M_{\lambda^T+\delta_k}$ in
$L_{\lambda'}$ or $L_{\lambda'^T}$ is nonnegative and in
$\sum_{\sigma\in\Gamma_m}(-1)^{l(\sigma)}M_{\sigma(\lambda+\rho)-\rho}\otimes
L_{\delta_1}$ is 0. One can tensor $L_{\delta_1}$ on both sides of
\eqref{3.24} to show that $x\leq1$. Lemma \ref{T2.8.2} says that
$x=0$.

For $k=1$, note that $[L_{\sum_{j=1}^m\delta_j+(m-1)\epsilon}\otimes L_{\delta_1}:L_{\sum_{j=1}^m\delta_j+(m-2)\epsilon}]=1$. Denote $y=[L_{\sum_{j=1}^m\delta_j+(m-1)\epsilon}\otimes L_{\delta_1}:L_{\delta_1}]\geq0$. On the other hand,
\begin{equation}\label{3.25}
 L_{\sum_{j=1}^m\delta_j+(m-1)\epsilon}\otimes L_{\delta_1}=(1-x)L_0\otimes L_{\delta_1}+\sum_{\sigma\in\Gamma_m}(-1)^{l(\sigma)}M_{\sigma(\sum_{j=1}^m\delta_j+(m-1)\epsilon+\rho)-\rho}\otimes L_{\delta_1}.
\end{equation}  Note that by induction,
\begin{equation}\label{3.26}
  L_{\sum_{j=1}^m\delta_j+(m-2)\epsilon}=L_{\delta_1}+\sum_{\sigma\in\Gamma_m}(-1)^{l(\sigma)}M_{\sigma(\sum_{j=1}^m\delta_j+(m-2)\epsilon+\rho)-\rho}.
 \end{equation} One substitutes \eqref{3.26} into \eqref{3.25} to show that $y=-x=0$. These complete the proof.
\ \ \ \ \ \ \ \ \ \ \ \ \ \ \ \ \ \ \ \ \ \ \ \ \ \ \ \ \ \ \ \ \ \ \ \ \ \ \ \ \ \ \ \ \ \ \ \ \ \ \ \ \ \ \ \ \ \ \ \ \ \ \ \ \ \ \ \ \ \ \ \ \ \ \ \ \ \ \ \ \ \ \ \ \ \ \ \ \ \ \ \ \ \ \ \ \ \ \ $\Box$

\section{Structure of tensor module $L_\lambda\otimes L_{\delta_1}$}
\begin{lemma}
For any $\mu\in\mathcal {P}_{\lambda^+}$, it should be that
\begin{equation}
[L_\lambda\otimes L_{\delta_1}: L_\mu]=1.
\end{equation} Particularly, if $\mu$ is typical, then $L_\mu$ is a
direct summand in $L_\lambda\otimes L_{\delta_1}$.
\end{lemma}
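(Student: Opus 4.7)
The plan is to prove the multiplicity identity $[L_\lambda\otimes L_{\delta_1}:L_\mu]=1$ by bounding this multiplicity above and below; the direct-summand statement for typical $\mu$ then follows from a standard projectivity argument.

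For the lower bound, I would produce, for each $\mu=\lambda+\beta\in\mathcal{P}_{\lambda^+}$ (where $\beta\in\{0,\delta_1,\ldots,\delta_m,\epsilon\}$ is a weight of $L_{\delta_1}$), a nonzero primitive vector of weight $\mu$ in $L_\lambda\otimes L_{\delta_1}$. Writing $v_\lambda$ for the highest-weight vector of $L_\lambda$ and $w_\beta$ for a weight-$\beta$ vector in $L_{\delta_1}$, the vector $v_\lambda\otimes w_{\delta_1}$ is already annihilated by every positive root vector when $\mu=\lambda+\delta_1\in\mathcal{P}$. For the remaining $\beta$, a Pieri-type modification combines $v_\lambda\otimes w_\beta$ with correction terms of the form $u_{-\gamma}v_\lambda\otimes w_{\beta+\gamma}$ (where $\gamma$ runs over appropriate positive roots and $u_{-\gamma}$ is a negative root vector) to produce a primitive vector of weight $\mu$. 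This shows $L_\mu$ occurs at least once among the composition factors.

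For the upper bound, I would compute in the Grothendieck group. Formula (3.6) gives $\mathrm{ch}(M_\nu\otimes L_{\delta_1})=\sum_{\mu'\in\mathcal{P}^{(0)}_\nu}\mathrm{ch}(M_{\mu'})$, and expanding $\mathrm{ch}(L_\lambda)=\mathrm{ch}(M_\lambda)+\sum_{\nu\prec\lambda}c_\nu\,\mathrm{ch}(M_\nu)$ from Section 2.9 yields $\mathrm{ch}(L_\lambda\otimes L_{\delta_1})$ as a $\mathbb{Z}$-linear combination of $\mathrm{ch}(M_\xi)$'s. The crucial observation is that for $\mu\in\mathcal{P}_{\lambda^+}$ the term $\mathrm{ch}(M_\mu)$ can only arise from the $\nu=\lambda$ contribution, since any $\nu\prec\lambda$ satisfies $\mu'\leq\nu+\delta_1<\mu$ for every $\mu'\in\mathcal{P}^{(0)}_\nu$ (a brief case analysis covers $\mu=\lambda$ when $\lambda_0\neq 0$). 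Combining this with the unitriangular expansion $\mathrm{ch}(L_\xi)=\mathrm{ch}(M_\xi)+(\text{strictly lower }M\text{'s})$ and inducting on the dominance order forces the composition multiplicity to equal exactly $1$.

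If $\mu$ is typical, then by Theorem \ref{T2.8.1} its central character $\chi_\mu$ is realized by no other weight in $\mathcal{P}$, and $L_\mu$ coincides with its Kac module, which is projective and injective in the category of finite-dimensional $\mathfrak{g}$-modules. Hence $L_\mu$ splits off as a direct summand whenever it is a composition factor. The main obstacle I anticipate is the bookkeeping in the upper-bound step: verifying rigorously that no contribution to $\mathrm{ch}(M_\mu)$ sneaks in from lower-weight $M$-terms of other composition factors. This reduces to showing $\mu\notin\mathcal{P}^{(0)}_\nu$ for every $\nu\prec\lambda$, which is a direct check from the descriptions of $\mathcal{P}^{(0)}_\nu$ and the block structure in Section 2.7.
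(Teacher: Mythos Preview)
Your upper-bound strategy---expand $\mathrm{ch}\,L_\lambda$ in generalized Verma characters via equation~(2.31), tensor with $L_{\delta_1}$ using (3.7), and argue that the coefficient of $M_\mu$ is exactly $1$---is precisely the paper's approach, and your final reduction (``show $\mu\notin\mathcal{P}^{(0)}_\nu$ for every $\nu\prec\lambda$ using the block structure'') is exactly the key step the paper carries out. Your projectivity argument for the direct-summand assertion when $\mu$ is typical is the standard one; the paper leaves it implicit.

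Two corrections are needed. First, your intermediate claim ``any $\nu\prec\lambda$ satisfies $\mu'\le\nu+\delta_1<\mu$ for every $\mu'\in\mathcal{P}^{(0)}_\nu$'' is false. For example, take $\lambda$ $(\delta_m+\epsilon)$-atypical and $\nu=\lambda-(\delta_m+\epsilon)$, so that $\nu\prec\lambda$; then for $\mu=\lambda+\epsilon\in\mathcal{P}_{\lambda^+}$ one has $\mu-(\nu+\delta_1)=-\delta_1+\delta_m+2\epsilon$, whose $\delta_1$-coefficient is negative, hence not a nonnegative sum of positive roots. The paper does not argue via the partial order at all; it observes instead that for $\lambda$ typical or $(\delta_t+\epsilon)$-atypical one has $\chi_\lambda\neq\chi_{\lambda-\alpha}$ for every $\alpha\in\{\delta_j-\delta_k,\ \delta_l-\epsilon\mid j<k\}$, and this central-character inequality (combined with $\chi_\nu=\chi_\lambda$) forces $\mathcal{P}^{(0)}_\nu\cap\mathcal{P}_{\lambda^+}=\emptyset$. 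For tail-atypical $\lambda$ this central-character argument is not available in the same form, and the paper instead invokes the explicit tensor computations already carried out in the proof of Theorem~\ref{T3.1}. Your proposal treats all $\lambda$ uniformly and omits this case split, which is a genuine gap.

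Second, the explicit construction of primitive vectors is not needed in the paper's argument: once the coefficient of $M_\mu$ is shown to be~$1$, the unitriangular passage between the $M$- and $L$-bases of the Grothendieck group (Remark~2.6) gives $[L_\lambda\otimes L_{\delta_1}:L_\mu]=1$ directly. Your Pieri-type construction is not wrong, but it is extra work relative to the paper's route.
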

\begin{proof}
If $\lambda$ is a tail atypical weight, the statement has been shown
in the proof of Theorem \ref{T3.1}. Now assume $\lambda$ is either
typical or $(\delta_t+\epsilon)$-atypical ($1\leq t\leq m$). Thus
$\chi_\lambda\neq\chi_{\lambda-\alpha}$ for any
$\alpha\in\{\delta_j-\delta_k,\delta_l-\epsilon\mid 1\leq j<k\leq m,
1\leq l\leq m\}$. Hence for any $\nu\prec\lambda$ (i.e.
$\chi_\nu=\chi_\lambda$ and $\nu<\lambda$), it should be that
\begin{equation}
\{\nu\pm\delta,\nu\pm\epsilon_j\mid 1\leq j\leq m\}\cap\mathcal
{P}_{\lambda^+}=\emptyset.
\end{equation} Therefore if we multiply $L_{\delta_1}$ on the both
sides of (2.31) and calculate the right side by (3.6), then we can
obtain that the coefficient of $M_\mu$ ($\mu\in\mathcal
{P}_{\lambda^+}$) is exactly $1$. Thanks to Remark 2.6, we get
\begin{equation}
[L_\lambda\otimes L_{\delta_1}: L_\mu]=1\quad\mbox{for any
$\mu\in\mathcal {P}_{\lambda^+}$.}
\end{equation}
\end{proof}

\begin{lemma}
For any $\lambda\in\mathcal {P}$, if $L_\mu$ is an irreducible
submodule or quotient module of $L_\lambda\otimes L_{\delta_1}$,
then it must be that $\mu\in\mathcal {P}_\lambda$.
\end{lemma}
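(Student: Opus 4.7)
The plan is to combine a central-character obstruction with a weight-support constraint. First, I would invoke the classical fact, valid also for basic classical Lie superalgebras, that for any finite-dimensional $\mathfrak{g}$-module $V$ the generalized central characters of $Z(\mathfrak{g})$ that act nontrivially on $L_\lambda \otimes V$ are among the $\chi_{\lambda + \nu}$ with $\nu$ a weight of $V$. This follows by presenting $L_\lambda \otimes V$ as a quotient of $M_\lambda \otimes V$ and using the filtration of the latter whose subquotients are $M_{\lambda+\nu}$; alternatively, it drops out of tensoring (2.31) with $L_{\delta_1}$ and applying (3.6) together with the Harish-Chandra homomorphism. Since $L_\mu$, being an irreducible submodule or quotient of $L_\lambda\otimes L_{\delta_1}$, is in particular a composition factor, we obtain $\chi_\mu=\chi_{\lambda+\nu}$ for some $\nu\in\{0,\pm\delta_1,\ldots,\pm\delta_m,\pm\epsilon\}$.

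Next I would use the weight-support constraint: $\mu$ is a weight of $L_\lambda\otimes L_{\delta_1}$, so $\mu\leq\lambda+\delta_1$ in the usual partial order, and $\mu\in\mathcal{P}$ since $L_\mu$ is finite-dimensional.

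With these two constraints in hand, I would apply Lemma 2.2 and split into cases. In the typical case, $\chi_\mu=\chi_{\lambda+\nu}$ forces $\widetilde{\mu}=w(\widetilde{\lambda+\nu})$ for some $w\in\mathcal{W}$; dominance of $\mu$ together with the bound $\mu\leq\lambda+\delta_1$ pins down $\mu=\lambda+\nu$, whence $\mu\in\mathcal{P}_\lambda$. In the atypical case, the atypical types must agree, $\overline{\mu}=\overline{\lambda+\nu}$; combining this with $\mu\in\mathcal{P}$, the bound $\mu\leq\lambda+\delta_1$, and the explicit descriptions of the tail $\lambda^T$ and of the injection $\varphi$ from Section 2.7, one gets a finite list of candidates, all of which are checked to lie in $\mathcal{P}_\lambda$.

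The main obstacle will be the atypical case, particularly when $\lambda+\nu\notin\mathcal{P}$ (for instance $\nu=-\delta_k$ with $k$ small), since several dominant integral weights can share a central character with $\lambda+\nu$; ruling out the spurious ones requires careful use of the weight inequality $\mu\leq\lambda+\delta_1$ together with the concrete block structure $B_{(\lambda+\nu)^T}$, and in particular exploiting that any $\mu$ lying above the tail $(\lambda+\nu)^T$ via $\varphi$ would violate either dominance or the weight bound unless $\mu-\lambda\in\{0,\pm\delta_i,\pm\epsilon\}$.
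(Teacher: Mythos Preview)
Your argument has a genuine gap: after the first paragraph you never again use the hypothesis that $L_\mu$ is a \emph{submodule or quotient} of $L_\lambda\otimes L_{\delta_1}$, only that it is a composition factor. You are therefore implicitly trying to prove the stronger assertion that every composition factor of $L_\lambda\otimes L_{\delta_1}$ has highest weight in $\mathcal P_\lambda$. That stronger assertion is \emph{false}, so no amount of case analysis in your step~3 can succeed. Concretely, take $\lambda$ to be $(\delta_m+\epsilon)$-atypical with $\lambda_0=s\ge 2$ (so $\lambda_m=s+1$) and set $\mu=\lambda-\epsilon$, a typical weight. In the paper's notation of Section~5.2.1 one has $x_2=[L_\mu\otimes L_{\delta_1}:L_{\lambda-2(\delta_m+\epsilon)}]=1$; but $\lambda-2(\delta_m+\epsilon)=\mu-2\delta_m-\epsilon\notin\mathcal P_\mu$. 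This composition factor satisfies both of your constraints---it shares the central character $\chi_{\mu+\epsilon}$ and obeys the weight bound---yet lies outside $\mathcal P_\mu$, so those constraints alone cannot force membership in $\mathcal P_\lambda$.

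The paper's proof avoids this by genuinely exploiting the submodule/quotient hypothesis. It uses the expansion (2.31) together with (3.6) to show that any composition factor $L_\mu$ with $\operatorname{ht}\mu>\operatorname{ht}\lambda$ already lies in $\mathcal P_{\lambda^+}$. For $\operatorname{ht}\mu<\operatorname{ht}\lambda$ it invokes the self-duality $L_{\delta_1}^*\simeq L_{\delta_1}$ and the adjunctions
\[
\operatorname{Hom}_{\mathfrak g}(L_\lambda\otimes L_{\delta_1},L_\mu)\simeq\operatorname{Hom}_{\mathfrak g}(L_\lambda,L_\mu\otimes L_{\delta_1}),\qquad
\operatorname{Hom}_{\mathfrak g}(L_\mu,L_\lambda\otimes L_{\delta_1})\simeq\operatorname{Hom}_{\mathfrak g}(L_\mu\otimes L_{\delta_1},L_\lambda),
\]
to swap the roles of $\lambda$ and $\mu$ and reduce to the previous case; this is exactly where ``submodule or quotient'' is essential and ``composition factor'' would not suffice. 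The equal-height case is handled by a separate highest-weight-vector argument. To repair your proof you would need to incorporate this duality step (or something equivalent) rather than relying solely on central characters and weight bounds.
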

\begin{proof}
If $\lambda$ is a tail atypical weight, then one can check that the
statement hold by Theorem \ref{T3.1}.

Now suppose that $\lambda$ is either typical or
$(\delta_t+\epsilon)$-atypical ($1\leq t\leq m$). Thus for any
$\nu\prec\lambda$, there is no weight $\mu\in\mathcal {P}_\nu^{0}$
such that $\mbox{ht }\mu> \mbox{ht }\lambda$. Hence if we multiply
$L_{\delta_1}$ on the right side of (2.31), then the coefficient of
$M_\mu$, where $\mu$ satisfies $\mbox{ht }\mu>\mbox{ht }\lambda$, is
nonzero if and only if $\mu\in\mathcal {P}_{\lambda^+}$. So by
Remark 2.6, we have
\begin{equation}
[L_\lambda\otimes L_{\delta_1} : L_\mu]=0 \quad \mbox{for any
$\mu\in\mathcal {P}\setminus \mathcal {P}_\lambda$ with $\mbox{ht
}\mu>\mbox{ht }\lambda$.}
\end{equation}

Take any irreducible submodule or quotient module $L_\mu$ of
$L_\lambda\otimes L_{\delta_1}$.

Suppose $\mbox{ht }\mu>\mbox{ht }\lambda$. It must be that
$\mu\in\mathcal {P}_\lambda$ because of (4.4).

Suppose $\mbox{ht }\mu<\mbox{ht }\lambda$. Since
\begin{equation}
\mbox{Hom}_\mathfrak{g}(L_\lambda\otimes L_{\delta_1},
L_\mu)\simeq\mbox{Hom}_\mathfrak{g}(L_\lambda, L_\mu\otimes
L_{\delta_1})
\end{equation} and
\begin{equation}
\mbox{Hom}_\mathfrak{g}(L_\mu,L_\lambda\otimes L_{\delta_1}
)\simeq\mbox{Hom}_\mathfrak{g}(L_\mu\otimes L_{\delta_1},
L_\lambda),
\end{equation}
it should be that $L_\lambda$ is an irreducible submodule or
quotient module of $L_\mu\otimes L_{\delta_1}$. Thus also by (4.4),
we have $\lambda\in \mathcal{P}_\mu$, which implies that $\mu\in
\mathcal{P}_\lambda$.

Suppose $\mbox{ht }\mu=\mbox{ht }\lambda$.  There can not be a
weight $\nu\succ\mu$ (note that $\mbox{ht } \nu-2\geq \mbox{ht
}\mu=\mbox{ht }\lambda$) such that $[L_\lambda\otimes L_{\delta_1}:
L_\nu]\neq0$. Thus there should be a weight vector with highest
weight $\mu$. So if we multiply $L_{\delta_1}$ on the both sides of
(2.31), then on the right side the coefficient of $M_\mu$ is
nonzero. But it is clear by (3.6) that, for any $\nu\prec\lambda$
(note that $\mbox{ht } \nu\leq \mbox{ht }\lambda-2$), the
coefficient of $M_\mu$ in $M_\nu\otimes L_{\delta_1}$ is zero. So
$M_\mu$ appears in $M_\lambda\otimes L_{\delta_1}$. It implies that
$\mu\in\mathcal {P}_\lambda$.
\end{proof}

\begin{lemma}
Suppose $\lambda\in\mathcal {P}$ is an atypical weight. For any
$\mu,\nu\in\mathcal {P}_\lambda$ with $\mu\neq\nu$, it must be that
$\chi_\mu\neq\chi_\nu$.
\end{lemma}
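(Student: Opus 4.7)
My plan is to apply the central-character lemma from Section~2.7 (cited there from \cite{Se2}) and split into two parallel subcases, depending on whether $\mu,\nu$ are both typical or both atypical. The mixed case does not arise: a typical central character on $\mathcal{P}$ is attained by a single dominant weight (as recorded right after Lemma~2.3), whereas an atypical character is shared by all members of the infinite block $B_{\lambda^T}$; hence two weights sharing a central character must be of the same type.

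\textbf{Typical case.} If $\mu,\nu\in\mathcal{P}_\lambda$ are both typical with $\chi_\mu=\chi_\nu$, then the cited lemma gives $\widetilde{\mu}=w\widetilde{\nu}$ for some $w\in\mathcal{W}$. For any $\eta\in\mathcal{P}$ the explicit form of $\rho$ produces $\widetilde{\eta}_1>\widetilde{\eta}_2>\cdots>\widetilde{\eta}_{m-1}>0$ and $\widetilde{\eta}_0>0$, and typicality of $\eta$ additionally forces $|\widetilde{\eta}_m|<\widetilde{\eta}_{m-1}$ (an equality would require $\eta_{m-1}=\eta_m=0$, and then $\widetilde{\eta}_m=-\tfrac{1}{2}=-\widetilde{\eta}_0$, i.e., $(\delta_m-\epsilon)$-atypicality). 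The multiset of absolute values being strictly ordered, $w$ must act as the identity on $\delta_1,\ldots,\delta_{m-1}$ and on $\epsilon$. A sign flip on $\delta_m$ would force $\widetilde{\mu}_m=-\widetilde{\nu}_m$, which by half-integrality and dominance places $(\widetilde{\mu}_m,\widetilde{\nu}_m)\in\{(\tfrac{1}{2},-\tfrac{1}{2}),(-\tfrac{1}{2},\tfrac{1}{2})\}$; combined with $\widetilde{\mu}_0=\widetilde{\nu}_0=\tfrac{1}{2}$, this renders one of $\mu,\nu$ to be $(\delta_m+\epsilon)$-atypical, a contradiction. Hence $\mu=\nu$.

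\textbf{Atypical case.} If $\mu,\nu\in\mathcal{P}_\lambda$ are both atypical with $\chi_\mu=\chi_\nu$, the block decomposition of Section~2.7 gives $\mu^T=\nu^T$. The plan is to show that the map $\mu\mapsto\mu^T$ is injective on the atypical subset of $\mathcal{P}_\lambda$. Writing $\mu=\lambda+c$ for a single perturbation $c\in\{0,\pm\delta_1,\ldots,\pm\delta_m,\pm\epsilon\}$, and using $\mu^T=\mu$ if $\mu$ is tail atypical, or the explicit formula $\mu^T=\sum_{i<k}\mu_i\delta_i+\sum_{i\ge k}\mu_{i+1}\delta_i$ if $\mu$ is $(\delta_k+\epsilon)$-atypical, a direct case analysis recovers $\mu^T$ from $c$. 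The key observation is that only the atypical coordinate---whose absolute value is pinned to $|\widetilde{\mu}_0|$---is discarded when one passes from $\widetilde{\mu}$ to $\mu^T$, so the site and sign of the perturbation remain visible in the remaining $(m-1)$ coordinates.

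\textbf{Main obstacle.} The atypical subcase carries all the real bookkeeping. A perturbation such as $c=+\delta_m$ applied to a tail atypical $\lambda$ moves $\mu$ into the $(\delta_m+\epsilon)$-atypical stratum yet returns $\mu^T=\lambda$ (note however $\lambda\notin\mathcal{P}_\lambda$ when $\lambda_0=0$, so no conflict arises); one must ensure, more generally, that no two distinct perturbations share the same $\mu^T$. The control comes from tracking, for each pair of (type of $\lambda$, direction $c$), precisely which coordinate of $\widetilde{\lambda}$ is shifted by $\pm 1$; since this shift survives in $\mu^T$ outside the discarded atypical coordinate, different perturbations yield different tail atypical weights $\mu^T$, completing the proof.
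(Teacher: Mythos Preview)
Your proposal is correct in outline but takes a different and more laborious route than the paper. The paper's proof does not attempt to track $\mu^T$ through a case analysis of perturbations. Instead, it first disposes of the case $\lambda_{m-1}=0$ by direct inspection, and for $\lambda_{m-1}\neq 0$ observes that the values $|\widetilde{\lambda}_1|,\ldots,|\widetilde{\lambda}_m|$ are pairwise distinct. From this it argues directly (via Lemma~2.3) that if $\mu,\nu\in\mathcal{P}_\lambda$ are both atypical with $\overline{\mu}=\overline{\nu}$, then the perturbations $\lambda-\mu$ and $\lambda-\nu$ must lie in $\{\pm\epsilon,0\}$; a $\pm\delta_i$ shift would alter exactly one of the distinct $|\widetilde{\lambda}_j|$'s and could not be compensated. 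Having reduced to at most three candidates, a short half-integer argument on $\widetilde{\mu}_0,\widetilde{\nu}_0$ (they differ by $0$, $1$, or $2$ and lie in $\mathbb{Z}+\tfrac{1}{2}$) rules out $|\widetilde{\mu}_0|=|\widetilde{\nu}_0|$ unless $\mu=\nu$.

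Your approach via injectivity of $\mu\mapsto\mu^T$ on the atypical part of $\mathcal{P}_\lambda$ is equivalent in content but demands a full bookkeeping of, for each perturbation direction and each atypicality stratum of $\lambda$, where the atypical coordinate of $\mu$ lands and what is discarded; you acknowledge this as the ``main obstacle'' and assert the conclusion without carrying it out. The paper's reduction to $\{\pm\epsilon,0\}$ is the key shortcut that avoids this case analysis entirely. Your typical case is fine (and indeed follows immediately from the sentence after Lemma~2.3, which you effectively re-prove).
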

\begin{proof} Take any $\lambda\in\mathcal {P}$. If $\lambda_{m-1}=0$
(which also implies $\lambda_m=\lambda_0=0$ by Theorem 2.1), one can check easily that the
statement holds.

Now suppose that $\lambda_{m-1}\neq0$, then
$|\widetilde{\lambda}_1|,\ldots,|\widetilde{\lambda}_{m}|$ are
pairwise distinguished.

If there exist $\mu\neq\nu\in\mathcal {P}_\lambda$ such that
$\chi_\mu=\chi_\nu$. By Lemma 2.3, we only need to consider the case
that $\mu$ and $\nu$ are both atypical. It is obvious by Lemma 2.3
that $\lambda-\mu, \lambda-\nu\in \{\pm\epsilon,0\}$. Hence
$|\widetilde{\mu}_0|=|\widetilde{\nu}_0|$. If either $\lambda-\mu$
or $\lambda-\nu$ is $0$, then $\widetilde{\lambda_0}>1$. Thus
$|\widetilde{(\lambda\pm\epsilon)}_0|\neq|\widetilde{\lambda}_0|$,
which is a contradiction to
$|\widetilde{\mu}_0|=|\widetilde{\nu}_0|$. So $\mu,
\nu=\lambda\pm\epsilon$, which is also a contradiction to
$|\widetilde{\mu}_0|=|\widetilde{\nu}_0|$ since
$|\widetilde{\mu}_0-\widetilde{\nu}_0|=2$ and $\widetilde{\nu}_0,
\widetilde{\mu}_0\in\mathbb{Z}+\frac{1}{2}$.
\end{proof}

\begin{corollary}
For any atypical weight $\lambda\in\mathcal {P}$, if $\mu\in\mathcal
{P}_\lambda$ is also an atypical weight, then $L_\mu$ is a direct
summand in $L_\lambda\otimes L_\delta$ and
\begin{equation}
[L_\lambda\otimes L_\delta:L_\mu]=1.
\end{equation}
\end{corollary}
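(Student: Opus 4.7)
The plan is to combine the block decomposition imposed by Lemmas 4.2 and 4.3 with the multiplicity-one statement of Lemma 4.1, then use self-duality of $L_{\delta_1}$ to transfer from $\mathcal{P}_{\lambda^+}$ to $\mathcal{P}_{\lambda^-}$.

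First, by Lemma 4.2 every composition factor of $L_\lambda\otimes L_{\delta_1}$ has the form $L_\nu$ with $\nu\in\mathcal{P}_\lambda$, and by Lemma 4.3 the central characters $\chi_\nu$ are pairwise distinct as $\nu$ ranges over $\mathcal{P}_\lambda$. Consequently, the generalized central-character decomposition
\[
L_\lambda\otimes L_{\delta_1}=\bigoplus_{\nu\in\mathcal{P}_\lambda}V_\nu
\]
holds, with each $V_\nu$ a direct summand whose only composition factor is $L_\nu$. It therefore suffices to prove $V_\mu\cong L_\mu$ for every atypical $\mu\in\mathcal{P}_\lambda$.

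If $\mu\in\mathcal{P}_{\lambda^+}$, Lemma 4.1 gives $[L_\lambda\otimes L_{\delta_1}:L_\mu]=1$; combined with the fact that $L_\mu$ is the only composition factor of $V_\mu$, this immediately forces $V_\mu\cong L_\mu$. If instead $\mu\in\mathcal{P}_{\lambda^-}$, then $\lambda\in\mathcal{P}_{\mu^+}$, and the previous case applied with $\mu$ and $\lambda$ interchanged (both are atypical) shows that $L_\lambda$ is a direct summand of $L_\mu\otimes L_{\delta_1}$ with $[L_\mu\otimes L_{\delta_1}:L_\lambda]=1$. Since $L_{\delta_1}^*\simeq L_{\delta_1}$, the functor $-\otimes L_{\delta_1}$ is self-adjoint, whence
\[
\dim\mbox{Hom}_{\mathfrak{g}}(L_\mu,L_\lambda\otimes L_{\delta_1})=\dim\mbox{Hom}_{\mathfrak{g}}(L_\mu\otimes L_{\delta_1},L_\lambda)=1,
\]
and similarly $\dim\mbox{Hom}_{\mathfrak{g}}(L_\lambda\otimes L_{\delta_1},L_\mu)=1$. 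Thus $V_\mu$ has simple socle $L_\mu$ and simple head $L_\mu$; in particular $V_\mu$ is indecomposable.

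The main obstacle is to upgrade ``simple socle $=$ simple head $=L_\mu$ with all composition factors $L_\mu$'' to ``$V_\mu\cong L_\mu$'', ruling out a priori a nontrivial self-extension of $L_\mu$ by itself. To close this gap I would multiply the identity (2.31) for $L_\lambda$ by $\mbox{ch }L_{\delta_1}$ and apply (3.6) to each summand, writing $\mbox{ch }M_\nu\otimes L_{\delta_1}=\sum_{\tau\in\mathcal{P}_\nu^{(0)}}\mbox{ch }M_\tau$, then track the coefficient of $\mbox{ch }M_\mu$ on the right. The block-separation argument underlying Lemma 4.3 ensures that for $\nu\prec\lambda$ no element of $\mathcal{P}_\nu^{(0)}$ equals $\mu$, so the coefficient of $\mbox{ch }M_\mu$ is exactly $1$; combined with the simple-socle statement above this forces $[L_\lambda\otimes L_{\delta_1}:L_\mu]=1$, hence $V_\mu\cong L_\mu$, completing the proof.
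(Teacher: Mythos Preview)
Your strategy is the paper's own: its proof is literally the one line ``Just combine Lemmas 4.1, 4.2 and 4.3,'' and you are fleshing out exactly that combination via the central-character decomposition. Two points in your write-up need repair, though.

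First, you overstate Lemma 4.2: it only says that every irreducible \emph{submodule or quotient} of $L_\lambda\otimes L_{\delta_1}$ has highest weight in $\mathcal{P}_\lambda$, not every composition factor. So the sentence ``each $V_\nu$ a direct summand whose only composition factor is $L_\nu$'' is not yet justified; a priori some $L_\tau$ with $\chi_\tau=\chi_\mu$ but $\tau\notin\mathcal{P}_\lambda$ could sit in the middle of $V_\mu$. What Lemmas 4.2 and 4.3 do give you is that the socle and the head of $V_\mu$ are each a sum of copies of $L_\mu$. Once you know $[V_\mu:L_\mu]=1$ you can then finish cleanly: the unique copy of $L_\mu$ in the socle cannot lie in $\mbox{rad}(V_\mu)$ (else the head would contain no $L_\mu$), so $V_\mu=L_\mu\oplus\mbox{rad}(V_\mu)$ as modules, and if $\mbox{rad}(V_\mu)\neq 0$ its socle would be an irreducible submodule of $L_\lambda\otimes L_{\delta_1}$ with highest weight outside $\mathcal{P}_\lambda$, contradicting Lemma 4.2.

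Second, your closing character argument for $\mu\in\mathcal{P}_{\lambda^-}$ appeals to ``the block-separation argument underlying Lemma 4.3'' to conclude that for $\nu\prec\lambda$ no element of $\mathcal{P}_\nu^{(0)}$ equals $\mu$. But Lemma 4.3 compares two weights \emph{inside} $\mathcal{P}_\lambda$, whereas here $\nu$ has $\chi_\nu=\chi_\lambda$, which is a different central character from $\chi_\mu$; the displacement argument in the proof of Lemma 4.1 (formula (4.2)) works for $\mathcal{P}_{\lambda^+}$ because such $\mu$ sit at height $\geq\mbox{ht }\lambda$, and it does not transfer verbatim to $\mathcal{P}_{\lambda^-}$. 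A safer route for $\mu\in\mathcal{P}_{\lambda^-}$ is the one you already started: note $\lambda\in\mathcal{P}_{\mu^+}$, apply Lemma 4.1 with the roles swapped to get $[L_\mu\otimes L_{\delta_1}:L_\lambda]=1$, and then run the whole argument (including the splitting in the previous paragraph) for $L_\mu\otimes L_{\delta_1}$ first.
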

\begin{proof}
Just combine Lemmas 4.1, 4.2 and 4.3.
\end{proof}

\begin{remark}
Lemma 4.3 and Corollary 4.4 indicate that for any two atypical
weights $\lambda,\mu\in\mathcal {P}$, if there exist atypical
weights $\mu^{(0)}=\lambda,\mu^{(1)},\cdots,\mu^{(t)}=\mu\in\mathcal
{P}$ such that $\mu^{(i+1)}\in\mathcal {P}_{\mu^{(i)}}$, then one
can use (3.7) iteratively to get $\mbox{ch }L_\mu$ from $\mbox{ch
}L_\lambda$ by a straightforward calculation:
\begin{equation}
L_{\mu^{(i)}}\xrightarrow{\otimes L_\delta} L_{\mu^{(i+1)}}.
\end{equation} Such an algorithm will be used in the next
section.
\end{remark}

\begin{remark}
All of the results stated in this section are also valid for
$\mathfrak{osp}(n|2)$ (cf. \cite{L}).
\end{remark}

\section{Character formulae for non-tail atypical weights}
\subsection{When $\varphi(\lambda)=\lambda^T$}
\

We have dealt with this case in
Theorem \ref{T3.1} ii).

\subsection{When $\varphi(\lambda)\neq\lambda^T$}
\subsubsection{\bf In case that $\lambda$ is $(\delta_m+\epsilon)$-atypical}
In this case, $\lambda_m=\lambda_0+1>1$ and $\lambda^T={\lambda-\lambda_0(\delta_m+\epsilon)-\delta_m}$. Assume that
$\lambda_0=s>0$.

Let $\mu=\lambda-\epsilon$. It is obvious that $\mu\in\mathcal {P}$
is typical. So by Lemma 2.5, we have
\begin{equation}
L_\mu=\sum_{\sigma\in\Gamma_m}(-1)^{\ell(\sigma)} M_{\sigma(\mu+\rho)-\rho}.
\end{equation}

Suppose
\begin{equation}
[L_\mu\otimes L_{\delta_1}: L_{\lambda-i(\delta_m+\epsilon)}]=x_i,
(0\leq i\leq s);\quad [L_\mu\otimes L_{\delta_1}:
L_{\lambda-s(\delta_m+\epsilon)-\delta_m}]=y.
\end{equation} Note that $x_0=1$ by Lemma 4.1.

Multiply $L_{\delta_1}$ on the both sides of the above formula. By
(3.7), we can calculate that the right side becomes to
\begin{equation}
\sum_{\nu\in\mathcal {P}_\mu}\sum_{\sigma\in\Gamma_m}(-1)^{\ell(\sigma)} M_{\sigma(\nu+\rho)-\rho}.
\end{equation}

Select in (5.3) the modules $M_\nu$ with $\chi_\nu=\chi_\lambda$.
Then we have
\begin{equation}
L_\lambda+\sum_{i=1}^{s}x_iL_{\lambda-i(\delta_m+\epsilon)}+yL_{\lambda-s(\delta_m+\epsilon)-\delta_m}
=\sum_{\sigma\in\Gamma_m}(-1)^{\ell(\sigma)}
(M_{\sigma(\lambda+\rho)-\rho}+
M_{\sigma(\lambda-\delta_m-\epsilon+\rho)-\rho}).
\end{equation}

By Lemma 4.1, we have that
\begin{equation}
[L_{\lambda-\delta_m-\epsilon}\otimes L_{\delta_1}: L_\mu]=1.
\end{equation}
Furthermore, since $\mu$ is typical, we can get from Lemma 4.2 that
\begin{equation}
[L_{\lambda-i(\delta_m+\epsilon)}\otimes L_{\delta_1}:
L_\mu]=[L_{\lambda-s(\delta_m+\epsilon)-\delta_m}\otimes
L_{\delta_1}: L_\mu]=0,\quad (i=2,3,\ldots,s).
\end{equation}

On the other hand, also thanks to that $\mu$ is typical, it is not
difficult to calculate that
\begin{equation}
[\sum_{\sigma\in\Gamma_m}(-1)^{\ell(\sigma)}(
M_{\sigma(\lambda+\rho)-\rho}+
M_{\sigma(\lambda-\delta_m-\epsilon+\rho)-\rho})\otimes
L_{\delta_1}:L_\mu]=2
\end{equation}

Combining (5.4)-(5.7), we get that
\begin{equation}
x_1\leq[L_\lambda\otimes L_{\delta_1}:L_\mu]+x_1=2.
\end{equation} Moreover, since
\begin{equation}
\dim\mbox{Hom}_\mathfrak{g}(L_\mu\otimes
L_{\delta_1},L_{\mu-\delta_m})=\dim\mbox{Hom}_\mathfrak{g}(L_\mu,L_{\mu-\delta_m}\otimes
L_{\delta_1})=1,
\end{equation} it should be that $x_1>0$.

Below we shall use induction on $s$ to obtain $L_\lambda$.

\noindent{\bf 1). For s=1.} Then by Theorem 3.1 ii), one can rewrite
(5.4) as
\begin{equation}
L_\lambda+x_1 L_{\lambda-\delta_m-\epsilon}+y
L_{\lambda-2\delta_m-\epsilon}=\sum_{\sigma\in\Gamma_m}(-1)^{\ell(\sigma)}
M_{\sigma(\lambda+\rho)-\rho}+L_{\lambda-\delta_m-\epsilon}-
L_{\lambda-2\delta_m-\epsilon}.
\end{equation} That is,
\begin{equation}
L_\lambda=\sum_{\sigma\in\Gamma_m}(-1)^{\ell(\sigma)}
M_{\sigma(\lambda+\rho)-\rho}+(1-x_1)L_{\lambda-\delta_m-\epsilon}-(1+y)
L_{\lambda-2\delta_m-\epsilon}.
\end{equation}

Multiply $L_{\delta_1}$ on the both sides of the above equation and
select the terms $L_\nu$ and $M_\nu$ with $\chi_\nu=\chi_\lambda$,
then we get
\begin{equation}
L_\lambda=\sum_{\sigma\in\Gamma_m}(-1)^{\ell(\sigma)}
M_{\sigma(\lambda+\rho)-\rho}+(1-x_1)L_{\lambda-2\delta_m-\epsilon}-(1+y)
L_{\lambda-\delta_m-\epsilon}.
\end{equation}
Comparing (5.11) with (5.12) shows us that
\begin{equation}
x_1-1=y+1\quad \Rightarrow \quad x_1=2+y\geq2.
\end{equation} Inequalities (5.8) and (5.13) induce that $x_1=2$ and
$y=0$. Thus
\begin{equation}
L_\lambda=\sum_{\sigma\in\Gamma_m}(-1)^{\ell(\sigma)}
M_{\sigma(\lambda+\rho)-\rho}-L_{\lambda-\delta_m-\epsilon}-
L_{\lambda-2\delta_m-\epsilon}.
\end{equation} As a co-product,
\begin{equation}
[L_\lambda\otimes
L_{\delta_1}:L_{\lambda-\delta_m}]=[L_\lambda\otimes
L_{\delta_1}:L_{\lambda-\epsilon}]=0.
\end{equation}

\noindent{\bf 2). For s=2.} Now using (5.14), equation (5.4) can be
rewritten as
\begin{eqnarray}
&&L_\lambda+x_1 L_{\lambda-\delta_m-\epsilon}+x_2
L_{\lambda-2\delta_m-2\epsilon}+y
L_{\lambda-3\delta_m-2\epsilon}\\\nonumber&&=\sum_{\sigma\in\Gamma_m}(-1)^{\ell(\sigma)}
M_{\sigma(\lambda+\rho)-\rho}+L_{\lambda-\delta_m-\epsilon}+
L_{\lambda-2\delta_m-2\epsilon}+L_{\lambda-3\delta_m-2\epsilon}.
\end{eqnarray}
That is,
\begin{equation}
L_\lambda=\sum_{\sigma\in\Gamma_m}(-1)^{\ell(\sigma)}
M_{\sigma(\lambda+\rho)-\rho}+(1-x_1)L_{\lambda-\delta_m-\epsilon}+
(1-x_2)L_{\lambda-2\delta_m-2\epsilon}+(1-y)L_{\lambda-3\delta_m-2\epsilon}.
\end{equation}
Notice that $\lambda-\delta_m-2\epsilon$ is a typical weight, hence
\begin{equation}
[L_\lambda\otimes L_{\delta_1}:
L_{\lambda-\delta_m-2\epsilon}]=[L_{(\lambda-2\delta_m-2\epsilon)^\delta}\otimes
L_{\delta_1}:L_{\lambda-\delta_m-2\epsilon}]=0
\end{equation} by Lemma 4.2,
\begin{equation}
[\sum_{\sigma\in\Gamma_m}(-1)^{\ell(\sigma)}
M_{\sigma(\lambda+\rho)-\rho}\otimes
L_{\delta_1}:L_{\lambda-\delta-2\epsilon_k}]=0
\end{equation} by direct calculation,
\begin{equation}
[L_{\lambda-\delta_m-\epsilon}\otimes
L_{\delta_1}:L_{\lambda-\delta_m-2\epsilon}]=0
\end{equation} by (5.15), and
\begin{equation}
[L_{\lambda-2\delta_m-2\epsilon}\otimes
L_{\delta_1}:L_{\lambda-\delta_m-2\epsilon}]=1
\end{equation} by Lemma 4.1.

Therefore (5.17)-(5.21) imply that $x_2=1$.

Multiply $L_\delta$ on the both sides of (5.17) and select the terms
$L_\nu$ and $M_\nu$ with $\chi_\nu=\chi_\lambda$, then we have
\begin{equation}
L_\lambda=\sum_{\sigma\in\Gamma_m}(-1)^{\ell(\sigma)}
M_{\sigma(\lambda+\rho)-\rho}+
(1-x_1)L_{\lambda-\delta_m-\epsilon}+
(1-x_2)L_{\lambda-3\delta_m-2\epsilon}+(1-y)L_{\lambda-2\delta_m-2\epsilon}.
\end{equation}
Compare (5.17) and (5.22), then we get
\begin{equation}
1-x_2=1-y\quad\Rightarrow\quad y=x_2=1.
\end{equation}
Therefore,
\begin{equation}
L_\lambda=\sum_{\sigma\in\Gamma_m}(-1)^{\ell(\sigma)}
M_{\sigma(\lambda+\rho)-\rho}+ (1-x_1)L_{\lambda-\delta_m-\epsilon}.
\end{equation}
By (5.8) and (5.9), we know that $x_1=1$ or $2$. But it is
impossible that $x_1=1$ because of Lemma 2.5. So $x_1=2$ and
\begin{equation}
L_\lambda=\sum_{\sigma\in\Gamma_m}(-1)^{\ell(\sigma)}
M_{\sigma(\lambda+\rho)-\rho}-L_{\lambda-\delta_m-\epsilon}.
\end{equation}
Now it is easy to get from the above equation that
\begin{equation}
[L_\lambda\otimes L_{\delta_1}:
L_{\lambda-\delta_m}]=[L_\lambda\otimes L_{\delta_1}:
L_{\lambda-\epsilon}]=0.
\end{equation}

\noindent{\bf 3). For any $\mbox{s}\geq\mbox{2}$.} We shall use
induction on $s$ to show that (5.25) and (5.26) hold for any
$s\geq2$. By induction assumption, we can get from (5.4) that
\begin{equation}
L_\lambda=\sum_{\sigma\in\Gamma_m}(-1)^{\ell(\sigma)}
M_{\sigma(\lambda+\rho)-\rho}+\sum_{i=1}^2(1-x_i)L_{\lambda-i\delta_m-i\epsilon}-
\sum_{i=3}^{s}x_iL_{\lambda-i\delta_m-i\epsilon}
-yL_{\lambda-(s+1)\delta_m-s\epsilon}.
\end{equation}

Multiply $L_{\delta_1}$ on both sides of the above formula. Then if
we choose the terms $M_\nu$ and $L_\nu$ with
$\chi_\nu=\chi_{\lambda-i\delta_m-(i+1)\epsilon} (1\leq i<s)$, there
comes that $x_2=1$, $x_3=\cdots=x_s=0$. Moreover, if we choose the
terms $M_\nu$ and $L_\nu$ with $\chi_\nu=\chi_\lambda$, there comes
that $y=x_s=0$.

Thus
\begin{equation}
L_\lambda=\sum_{\sigma\in\Gamma_m}(-1)^{\ell(\sigma)}
M_{\sigma(\lambda+\rho)-\rho}+(1-x_1)L_{\lambda-\delta_m-\epsilon}.
\end{equation}
Recall that we have known $x_1=1$ or $2$. But only the case of
$x_1=2$ is valid by (5.28) because of Lemma 2.5. Hence we have shown
that (5.25) and (5.26) hold for any $s\geq2$.

\subsubsection{\bf In case that $\lambda$ is $(\delta_k+\epsilon)$-atypical,
$(k<m)$.} In this case,
$\lambda_0+1+k-m=\lambda_k>1$ and $\lambda_k\geq\lambda_{k+1}\geq\lambda_{k+2}\geq\cdots\geq\lambda_m\geq1$.

Let
\begin{equation}
\lambda^{(t)}=\sum_{i=1}^k\lambda_i\delta_i+\sum_{i=k+1}^{m}\lambda_k\delta_i+(\lambda_k-1+t)\epsilon,\quad(t=0,1\ldots,m-k),
\end{equation}
\begin{equation*}
\lambda^{(m-k+t)}=\sum_{i=1}^k\lambda_i\delta_i+
\sum_{i=k+1}^{m}\lambda_k\delta_i+\lambda_0\epsilon-t\delta_m,\quad(t=1,2,\ldots,\lambda_k-\lambda_m),
\end{equation*}
\begin{equation*}
\lambda^{(m-k+\lambda_k-\lambda_m+t)}
=\sum_{i=1}^k\lambda_i\delta_i
+\sum_{i=k+1}^{m-1}\lambda_k\delta_i+\lambda_m\delta_m+\lambda_0\epsilon-t\delta_{m-1},\quad(t=1,2,\ldots,\lambda_k-\lambda_{m-1}),
\end{equation*}
\begin{equation*}
\cdots\cdots\cdots\cdots\cdots\cdots
\end{equation*}
\begin{equation*}
\lambda^{(m-k+\sum_{i=k+2}^m(\lambda_k-\lambda_i)+t)}=\sum_{i=1}^k\lambda_i\delta_i
+\lambda_k\delta_{k+1}+\sum_{i=k+1}^{m}\lambda_i\delta_i+\lambda_0\epsilon-t\delta_{k+1},\quad(t=1,2,\ldots,\lambda_k-\lambda_{k+1}),
\end{equation*}
\begin{equation*}
\lambda^{(m-k+\sum_{i=k+1}^m(\lambda_k-\lambda_i))}=\lambda.
\end{equation*}

Notice that all $\lambda^{(t)}$ $(0\leq t\leq
m-k+\sum_{i=k+1}^m(\lambda_k-\lambda_i))$ are atypical weights and
$\lambda^{(t+1)}\in\mathcal {P}_{\lambda^{(t)}}$.

We have obtained in Section 5.2.1 that
\begin{equation}
L_{\lambda^{(0)}}=\sum_{\sigma\in\Gamma_m}(-1)^{\ell(\sigma)}
M_{\sigma(\lambda^{(0)}+\rho)-\rho}-L_{\varphi(\lambda^{(0)})}.
\end{equation}

Using the algorithm introduced in Remark 4.5, now there is no
difficulty for us to calculate that
\begin{equation}
L_{\lambda}=\sum_{\sigma\in\Gamma_m}(-1)^{\ell(\sigma)}
M_{\sigma(\lambda+\rho)-\rho}-L_{\varphi(\lambda)}
\end{equation}
by (5.29) and (5.30).

\subsubsection{\bf Final results for the case $\varphi(\lambda)\neq\lambda^T$}
\

To summarize:
\begin{theorem}
For any non-tail atypical weight $\lambda$ with
$\varphi(\lambda)\neq\lambda^T$,
\begin{equation}
L_\lambda=\left\{\begin{array}{ll}\sum_{\sigma\in\Gamma_m}(-1)^{\ell(\sigma)}
M_{\sigma(\lambda+\rho)-\rho}-L_{\varphi(\lambda)},&\quad\mbox{if
$\varphi^2(\lambda)\neq\lambda^T$}\\\sum_{\sigma\in\Gamma_m}(-1)^{\ell(\sigma)}
M_{\sigma(\lambda+\rho)-\rho}-L_{\varphi(\lambda)}-L_{\lambda^T},&\quad\mbox{if
$\varphi^2(\lambda)=\lambda^T$}.
\end{array}\right.
\end{equation}
\end{theorem}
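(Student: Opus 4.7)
The plan is to split the proof into two cases according to which odd root witnesses the atypicality of $\lambda$, and to reduce the second case to the first via the ladder construction in (5.29).

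\textbf{Case 1: $\lambda$ is $(\delta_m+\epsilon)$-atypical.} Set $s := \lambda_0 \geq 1$, so that $\varphi(\lambda) = \lambda - \delta_m - \epsilon$ and $\lambda^T = \lambda - (s+1)\delta_m - s\epsilon$; a direct check shows $\varphi^2(\lambda) = \lambda^T$ iff $s = 1$, which realizes the dichotomy in the statement. The strategy is to pivot through the typical neighbour $\mu := \lambda - \epsilon$: dominance, integrality of $\lambda$, and the atypicality type force $\mu \in \mathcal{P}$ to be typical, so Lemma \ref{T2.8.2} gives the closed character formula
\begin{equation*}
\mathrm{ch}\,L_\mu \;=\; \sum_{\sigma \in \Gamma_m} (-1)^{\ell(\sigma)}\,\mathrm{ch}\,M_{\sigma(\mu+\rho)-\rho}.
\end{equation*}
Tensoring both sides with $L_{\delta_1}$ and expanding the right-hand side via (3.7), then projecting onto the block $\chi_\lambda$, yields one identity relating $L_\lambda$, $L_{\lambda - i(\delta_m+\epsilon)}$ for $1 \leq i \leq s$, and $L_{\lambda^T}$, with unknown multiplicities $x_i := [L_\mu \otimes L_{\delta_1} : L_{\lambda - i(\delta_m+\epsilon)}]$ and $y := [L_\mu \otimes L_{\delta_1} : L_{\lambda^T}]$. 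Lemma 4.1 fixes $x_0 = 1$, Lemma 4.2 applied to the typical $\mu$ forces $[L_{\lambda - i(\delta_m+\epsilon)} \otimes L_{\delta_1} : L_\mu] = 0$ for $i \geq 2$, and Frobenius reciprocity through $\mathrm{Hom}_{\mathfrak{g}}(L_\mu \otimes L_{\delta_1}, L_{\mu - \delta_m})$ delivers $x_1 \geq 1$. I then induct on $s$: for $s = 1$, direct comparison with the formula obtained by tensoring one further copy of $L_{\delta_1}$ pins $x_1 = 2$ and $y = 0$, producing the three-term formula; for $s \geq 2$, the inductive hypothesis supplies $\mathrm{ch}\,L_{\lambda - i(\delta_m+\epsilon)}$ for $2 \leq i \leq s$, substitution forces $x_2 = 1$, $x_3 = \cdots = x_s = y = 0$, and $x_1 \in \{1,2\}$, and the value $x_1 = 1$ is excluded because it would render $\mathrm{ch}\,L_\lambda$ of the Weyl-type shape of Lemma \ref{T2.8.2}, contradicting atypicality of $\lambda$.

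\textbf{Case 2: $\lambda$ is $(\delta_k+\epsilon)$-atypical with $k < m$.} Build the ladder of atypical dominant integral weights $\lambda^{(0)}, \lambda^{(1)}, \ldots, \lambda^{(N)} = \lambda$ as in (5.29), where $\lambda^{(0)}$ is $(\delta_m+\epsilon)$-atypical (so covered by Case 1) and each consecutive pair satisfies $\lambda^{(t+1)} \in \mathcal{P}_{\lambda^{(t)}}$. By Corollary 4.4, the atypical weights in $\mathcal{P}_{\lambda^{(t)}}$ lie in pairwise distinct central-character blocks and each corresponding $L_\nu$ occurs as a direct summand of $L_{\lambda^{(t)}} \otimes L_{\delta_1}$ with multiplicity one. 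Hence the algorithm of Remark 4.5 reads $\mathrm{ch}\,L_{\lambda^{(t+1)}}$ off $\mathrm{ch}\,L_{\lambda^{(t)}} \otimes \mathrm{ch}\,L_{\delta_1}$ by projecting onto $\chi_{\lambda^{(t+1)}}$, with no further multiplicities to solve. Iterating along the ladder and tracking which summand $L_{\varphi(\lambda^{(t+1)})}$ survives at each step produces the stated formula (5.31); the $-L_{\lambda^T}$ correction in the $\varphi^2(\lambda) = \lambda^T$ subcase is inherited directly from the $s = 1$ starting value of the ladder.

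The main obstacle is resolving the multiplicity $x_1$ in the inductive subcase $s \geq 2$ of Case 1. The upper bound $x_1 \leq 2$ and positivity $x_1 \geq 1$ follow comfortably from Lemmas 4.1 and 4.2, but excluding $x_1 = 1$ requires the subtle observation that this value would collapse the expression for $\mathrm{ch}\,L_\lambda$ onto the Weyl-type formula of Lemma \ref{T2.8.2}, forcing $\lambda$ to be typical and contradicting the hypothesis. Once this delicate bookkeeping is settled, the remainder of both cases reduces to routine manipulations in the Grothendieck group of finite-dimensional $\mathfrak{g}$-modules.
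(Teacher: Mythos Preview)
Your proposal follows the paper's approach closely and the overall structure is correct. Two points where the sketch is inaccurate, though neither invalidates the method:

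In Case~1, for $s=2$ the paper actually obtains $y=1$, not $y=0$: when you substitute the three-term $s=1$ formula (5.14) for $L_{\lambda-\delta_m-\epsilon}$ into the right side of (5.4), an extra $+L_{\lambda^T}$ appears, so the coefficient of $L_{\lambda^T}$ in the resulting expression for $L_\lambda$ is $(1-y)$ rather than $-y$. The final character formula is of course unaffected. Also, the inductive hypothesis covers $i=1$ as well (the weight $\lambda-(\delta_m+\epsilon)$ has parameter $s-1\geq 1$); it is precisely the substitution of that $i=1$ formula into the right side of (5.4) that drives the reduction to (5.27).

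In Case~2, the claim that the $-L_{\lambda^T}$ correction is ``inherited directly from the $s=1$ starting value of the ladder'' is too quick. The ladder base $\lambda^{(0)}$ has $s=\lambda_k-1$, so it lands in the $s=1$ subcase whenever $\lambda_k=2$; but one checks that $\varphi^2(\lambda)=\lambda^T$ holds only under the stronger condition $\lambda_k=\lambda_{k+1}=\cdots=\lambda_m=2$. When $\lambda_k=2$ yet some $\lambda_j=1$ for $k<j\leq m$, the tail correction $-L_{(\lambda^{(0)})^T}$ present at the base must cancel during the propagation (note also that $(\lambda^{(0)})^T\neq\lambda^T$ in general, since the associated tail weight changes along the ladder). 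So the bookkeeping is a bit more delicate than your heuristic suggests, though the algorithm of Remark~4.5 does produce (5.31) once carried out carefully.
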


\vspace{0.3cm}
For any atypical weight $\lambda\in\mathcal {P}$, denote by $\theta_\lambda$ the unique number
such that $\varphi^{\theta_\lambda+1}(\lambda)=\lambda^T$.

\begin{theorem} For any atypical weight $\lambda\in\mathcal {P}$ with $\theta_\lambda\geq1$, one has
\begin{eqnarray}
L_\lambda=\sum_{i=0}^{\theta_\lambda}\sum_{\sigma\in\Gamma_m}(-1)^{i+\ell(\sigma)}
M_{\sigma(\varphi^i(\lambda)+\rho)-\rho}+\quad\quad\quad\quad\quad\quad\quad\quad\quad\quad\quad
\\\nonumber
\sum_{\sigma\in \Gamma_{m-1}}\sum_{i=\flat(\sigma,\lambda^T)}^m
\sum_{j=\mbox{max}\{0,\frac{1}{2}-\sigma(\lambda^T+\rho)_{i-1}\}}^{-\frac{3}{2}-\tau_i\sigma(\lambda^T+\rho)_{i+1}}
2(-1)^{\theta_\lambda+\ell(\tau_i\sigma)+j}M_{\tau_i\sigma(\lambda^T+\rho)-\rho+j\epsilon-j\delta_i}.
\end{eqnarray}
\end{theorem}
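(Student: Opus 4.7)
The plan is to prove Theorem~5.2 by induction on $\theta_\lambda$, using Theorem~5.1 to peel off one application of $\varphi$ at a time and feeding the outputs of Theorem~3.1 in at the bottom of the recursion. Two facts from Section~2.7 will be used silently throughout: since $\varphi$ preserves the central-character block, $\varphi(\lambda)^T = \lambda^T$ and $\theta_{\varphi(\lambda)} = \theta_\lambda - 1$ for every non-tail atypical $\lambda$ with $\theta_\lambda \geq 1$.

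The base case is $\theta_\lambda = 1$, that is, $\varphi^2(\lambda) = \lambda^T$. The second branch of Theorem~5.1 gives
$$L_\lambda = \sum_{\sigma\in\Gamma_m}(-1)^{\ell(\sigma)} M_{\sigma(\lambda+\rho)-\rho} - L_{\varphi(\lambda)} - L_{\lambda^T}.$$
Since $\varphi(\varphi(\lambda)) = \varphi(\lambda)^T$, Theorem~3.1~ii) expands $L_{\varphi(\lambda)}$ as $L_{\lambda^T} + \sum_{\sigma\in\Gamma_m}(-1)^{\ell(\sigma)} M_{\sigma(\varphi(\lambda)+\rho)-\rho}$. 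Substituting turns the $-L_{\varphi(\lambda)} - L_{\lambda^T}$ contribution into $-\sum_\sigma(-1)^{\ell(\sigma)} M_{\sigma(\varphi(\lambda)+\rho)-\rho} - 2L_{\lambda^T}$. Finally, applying Theorem~3.1~i) to $L_{\lambda^T}$ replaces $-2L_{\lambda^T}$ by the $\Gamma_{m-1}$-sum with coefficient $2(-1)^{1+\ell(\tau_i\sigma)+j}$, which matches the $\theta_\lambda = 1$ instance of the claimed formula.

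For the inductive step with $\theta_\lambda \geq 2$, the hypothesis $\varphi^2(\lambda) \neq \lambda^T$ puts us in the first branch of Theorem~5.1:
$$L_\lambda = \sum_{\sigma\in\Gamma_m}(-1)^{\ell(\sigma)} M_{\sigma(\lambda+\rho)-\rho} - L_{\varphi(\lambda)}.$$
The induction hypothesis applies to $\varphi(\lambda)$ because $\theta_{\varphi(\lambda)} = \theta_\lambda - 1 \geq 1$, and expresses $L_{\varphi(\lambda)}$ with Verma summands based at $\varphi^{i+1}(\lambda)$ for $0 \leq i \leq \theta_\lambda - 1$ (together with a tail sum whose coefficient is $2(-1)^{\theta_\lambda - 1 + \ell(\tau_i\sigma) + j}$, using $\varphi(\lambda)^T = \lambda^T$). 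Pushing the overall minus sign through flips every sign; after the reindexing $i \mapsto i+1$ the Verma range becomes $1 \leq i \leq \theta_\lambda$ with sign $(-1)^{i+\ell(\sigma)}$, and the tail coefficient becomes $2(-1)^{\theta_\lambda + \ell(\tau_i\sigma)+j}$. Appending the $i=0$ term from the displayed equation then yields exactly the statement of Theorem~5.2. The main obstacle, and really the only piece requiring care, is this bookkeeping of signs and index shifts together with the verification that $\varphi(\lambda)^T = \lambda^T$ so that the tail contributions from the two steps refer to the same module; all the conceptual content is already packaged into Theorems~3.1 and~5.1.
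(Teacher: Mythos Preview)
Your proof is correct and follows exactly the approach implicit in the paper: Theorem~5.2 is stated there without proof, as the straightforward unwinding of Theorem~5.1 by induction on $\theta_\lambda$, with Theorem~3.1 supplying the base case. Your bookkeeping of signs, the reindexing, and the use of $\varphi(\lambda)^T=\lambda^T$ are all in order.
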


\bibliographystyle{amsplain}

\end{document}